\documentclass[11pt,a4paper]{article}

\usepackage{comment}
\usepackage{graphicx}
\usepackage{amsmath}
\usepackage{amsfonts}
\usepackage{amssymb}
\usepackage{enumerate}
\usepackage{lscape}
\usepackage{longtable}
\usepackage{rotating}
\usepackage{color}
\usepackage{url}
\usepackage{rotating}
\usepackage{algpseudocode}
\usepackage[ruled]{algorithm}
\usepackage{pgf,tikz}
\usepackage{subfig}
\usepackage[titletoc]{appendix}
\usepackage{wrapfig}
\usepackage{adjustbox}
\usepackage{mathrsfs}
\usepackage{pgfplots}
\usetikzlibrary{arrows}
\usepackage{color}
\usepackage{jingfu}
\usepackage{bbm}
\usepackage{booktabs}
\usepackage{multirow}
\usepackage{mathtools}
\usepackage{makecell}

\usepackage[sort,numbers]{natbib}
\usepackage{eqparbox}%

\newcounter{starnumber}
\newcommand{\stars}[1]{
  \forloop{starnumber}{1}{\value{starnumber} < 4}{
    \ifthenelse{#1 < \value{starnumber}}{\ding{73}}{\ding{72}}%
  }
}

\numberwithin{equation}{section}%

\usepackage[symbol]{footmisc}%

\newtheorem{theorem}{Theorem}[section]
\newtheorem{proposition}{Proposition}[section]

\newtheorem{corollary}{Corollary}[section]

\newtheorem{lemma}[theorem]{Lemma}

\newtheorem{remark}{Remark}[section]

\newenvironment{proof}[1][Proof]{\noindent \textbf{#1.} }{\hfill$\Box$\par\medskip}

\DeclareMathOperator*{\argmin}{argmin}

\DeclareMathOperator{\interior}{int}

\DeclareMathOperator{\conv}{conv}

\newcommand{\beqn}[1]{\begin{equation}\label{#1}}
\newcommand{\eeqn}{\end{equation}}

\newcommand\shortdash{\mathop{\mbox{-}}}

\definecolor{darkgreen}{rgb}{0,0.6,0}
\definecolor{aau2}{rgb}{0.0, 0.5, 0.69}
\definecolor{aau3}{rgb}{0.0, 0.53, 0.74}
\definecolor{aau4}{rgb}{0.0, 0.48, 0.65}
\definecolor{aau5}{rgb}{0.0, 0.45, 0.73}
\definecolor{rsap}{RGB}{130, 36, 51}
\definecolor{gsap}{RGB}{112, 164, 137}

\definecolor{tud}{rgb}{0.43,0.73,0.11}
\definecolor{verde}{rgb}{0.33,0.53,0.11}

\definecolor{ttffqq}{rgb}{0.0, 0.48, 0.65} 
\definecolor{ffqqqq}{rgb}{0.0, 0.5, 0.69} 

\usetikzlibrary{arrows}

\tikzstyle{decision} = [diamond, draw, fill=blue!20,
text width=4.5em, text badly centered, node distance=3cm, inner sep=0pt]
\tikzstyle{block} = [rectangle, draw, fill=blue!20,
text centered, rounded corners, minimum height=4em]
\tikzstyle{line} = [draw, -latex']
\tikzstyle{cloud} = [draw, ellipse,fill=red!20, node distance=3cm,
minimum height=2em]
\tikzstyle{cloud2} = [draw, ellipse,fill=green!20, node distance=3cm,
minimum height=2em]

\usetikzlibrary{patterns}

\begin{document}
	
	\title{Stochastic set-valued optimization \\ and its application to robust learning}
	
	\author{
		T. Giovannelli\thanks{Department of Mechanical and Materials Engineering, University of Cincinnati, Cincinnati, OH 45221, USA ({\tt giovanto@ucmail.uc.edu}).}
		\and
		J. Tan\thanks{Department of Industrial and Systems Engineering, Lehigh University, Bethlehem, PA 18015-1582, USA ({\tt jit423@lehigh.edu}).}
		\and
		L. N. Vicente\thanks{Department of Industrial and Systems Engineering, Lehigh University, Bethlehem, PA 18015-1582, USA ({\tt lnv@lehigh.edu}).}
	}
	
	\maketitle
	
 \begin{abstract}
In this paper, we develop a stochastic set-valued optimization (SVO) framework tailored for robust machine learning. In the SVO setting, each decision variable is mapped to a set of objective values, and optimality is defined via set relations. 
We focus on SVO problems with hyperbox sets, which can be reformulated as multi-objective optimization (MOO) problems with finitely many objectives and serve as a foundation for representing or approximating more general mapped sets. Two special cases of hyperbox-valued optimization (HVO) are interval-valued (IVO) and rectangle-valued (RVO) optimization.
We construct stochastic IVO/RVO formulations that incorporate subquantiles and superquantiles into the objective functions of the MOO reformulations,
providing a new characterization for subquantiles.
These formulations provide interpretable trade-offs by capturing both lower- and upper-tail behaviors of loss distributions, thereby going beyond standard empirical risk minimization and classical robust models. To solve the resulting multi-objective problems, we adopt stochastic multi-gradient algorithms and select a Pareto knee solution. In numerical experiments, the proposed algorithms with this selection strategy exhibit improved robustness and reduced variability across test replications under distributional shift compared with empirical risk minimization, while maintaining competitive accuracy.
 \end{abstract}

\section{Introduction} 

    In set-valued optimization~(SVO), the objective is to minimize a set-valued mapping~$S$ from~$\mathbb{R}^n$ to~$\mathbb{R}^m$, 
    which assigns to each point in~$\mathbb{R}^n$ a set of points in~$\mathbb{R}^m$, rather than a single value. Two main solution concepts can be used to define optimality: the set approach and the vector approach. In the set approach, optimal solutions (referred to as minimal solutions in~SVO contexts) are determined by comparing image sets of~$S$, seeking a point~$x_* \in \mathbb{R}^n$ whose image set~$S(x_*) \subseteq \mathbb{R}^m$ is not dominated by others based on a set relation. In contrast, the vector approach focuses on comparing individual vectors within the image sets rather than the sets themselves, seeking a point~$x_* \in \mathbb{R}^n$ for which there exists a point~$y_* \in S(x_*) \subseteq \mathbb{R}^m$ that is non-dominated with respect to all points in~$\cup_{x \in \mathbb{R}^n} S(x)$. 
    
    Our paper adopts the set approach, as it aligns well with the robust learning applications we will propose. Among the set relations proposed for comparing the image sets of a set-valued mapping at different solutions, we will focus on the well-known set-less relations introduced by Kuroiwa et al.~\cite{DKuroiwa_1998}. Intuitively, when using the first orthant as an ordering cone, an image set~$S(\bar{x})$ dominates another image set~$S(\tilde{x})$, with~$\bar{x} \neq \tilde{x}$, based on the lower (upper) set-less relation if the points with the minimum (maximum) objective values in~$S(\bar{x})$ are below the corresponding points in~$S(\tilde{x})$. A feasible point~$x_*$ is referred to as a minimal solution if its image set~$S(x_*)$ is not dominated by the image set of any other feasible point. This concept is analogous to efficient solutions in vector-valued optimization~(VVO), where the objective is a vector function and the image of a feasible point under such a function is a vector in~$\Rmbb^m$. Specifically, a feasible point is called an efficient solution if its image is not dominated by the image of any other feasible point. When the ordering cone is the non-negative orthant~$\mathbb{R}^m_+$, which is the case of interest for our paper, VVO reduces to multi-objective optimization~(MOO), and efficient solutions are called Pareto optimal solutions.

    More formally, given a set-valued mapping~$S: \mathbb{R}^n \rightrightarrows \mathbb{R}^m$, a closed, convex, and pointed cone~$K \subset \mathbb{R}^m$ (where convex means~$K + K = K$ and pointed means~$K \cap -K = \{0\}$), and a partial order relation between sets with respect to~$K$, denoted by the set-less relation~$\preceq_K$ (which will be rigorously introduced in Section~\ref{sec:notation_basic_definitions}), an~SVO problem can be formulated as follows
    \begin{equation}\label{prob:svo}
    \preceq_K \shortdash \min _{x \in X} S(x),
    \end{equation}
    where~$X \subseteq \mathbb{R}^n$.
    A class of set-valued optimization problems~\eqref{prob:svo}, considered in this paper, is hyperbox-valued optimization (HVO), in which each image set~$S(x)\subset\mathbb{R}^m$ is a hyperbox specified by component-wise lower and upper bounds (see~\cite{ahmad2013interval,eichfelder2025two}). HVO provides a foundation for representing or approximating more general set-valued mappings, and two special cases are interval-valued optimization (IVO), when~$m=1$ and~$S(x)$ is an interval in~$\mathbb{R}$, and rectangle-valued optimization (RVO), when~$m=2$ and~$S(x)$ is a rectangle in~$\mathbb{R}^2$. Low-dimensional hyperboxes, such as intervals and rectangles, lead to HVO formulations that are solvable and interpretable.

    \paragraph{Robust learning motivation.} 
    Machine learning~(ML) robustness refers to a model's ability to maintain its performance despite uncertainties or adversarial conditions~\cite{JZLi_2018,JSteinhardt_2018}. In practical applications, data is often noisy, incomplete, subject to adversarial attacks, or unrepresentative of the entire population~\cite{MAGianfrancesco_etal_2018,AMKhan_etal_2014}, leading to unreliable and inconsistent results in terms of accuracy and fairness across different datasets~\cite{NMehrabi_etal_2022,HEvans_DSnead_2024,SHajian_FBonchi_CCastillo_2016,ENtoutsi_etal_2020}. In today's data-driven world, robust~ML techniques can address such challenges by ensuring that models remain resilient in worst-case scenarios and enhancing their generalizability, allowing them to perform effectively on unseen data. 
    Therefore, through robust learning, we can build~ML models that are not only consistently accurate and fair but also resilient to unexpected future circumstances.
    
    Optimization methods are instrumental components to achieve robust~ML models and have been incorporated into~ML application problems to achieve various forms of robustness. 
    Adversarial robustness ensures a model withstands data perturbations designed to mislead it, and it is typically addressed through min-max or bilevel optimization, where the max/lower-level problem is posed on the variables that perturb the data in a worst-case fashion, while the min/upper-level problem minimizes the training loss function on the model parameters~\cite{CSzegedy_etal_2013}. Robustness to imbalanced data adjusts weights in bilevel formulations to penalize misclassification of underrepresented classes~\cite{MSOzdayi_MKantarcioglu_RIyer_2021,YRoh_etal_2020,MMKamani_etal_2020}. Robustness to overfitting prevents the model from excessively fitting training data, and it is achieved through~L1/L2 regularization, cross-validation/hyperparameter tuning (which can be framed as a bilevel problem), or distributionally robust optimization (which can be framed as a min-max problem)~\cite{HNamkoong_JCDuchi_2016,Dlevy_etal_2020,JDuchi_HNamkoong_2021}, where the goal is to ensure that a model performs well across a range of possible data distributions.

    Hence, the main optimization approaches for robust~ML require tuning parameters, optimizing at different levels, or considering different data distributions.  
    Additionally, traditional robust ML techniques, including bilevel, min-max, and regularization methods, fall short in capturing the full complexity of uncertainty in real-world data by reducing variability to a single measure: the empirical risk. 
    
    \paragraph{A brief SVO literature review.} Research in~SVO began in the late~1990s and early~2000s with Kuroiwa's work~\cite{DKuroiwa_1998,DKuroiwa_2001}, which introduced set relations to compare sets and establish preferences among them. 
    A comprehensive resource on~SVO is the book by Khan et al.~\cite{AMKhan_etal_2014}, which covers foundational concepts for set-valued mappings derived from variational analysis~\cite{RTRockafellar_RJBWets_1998}, and also discusses optimality conditions, duality theory, and numerical methods for~SVO. Another popular introduction to the topic is the review by Hamel et al.~\cite{AHHamel_etal_2015}.
     
    In~2015, Jahn made a fundamental contribution to the field of~SVO by proposing vectorization, which transforms an~SVO problem into a~VVO problem by reducing the comparison of two sets to the pointwise comparison of functions in an infinite-dimensional vector space~\cite{JJahn_2015}. Vectorization for~SVO plays the same role that scalarization plays for~VVO, where a~VVO problem is reduced to a real-valued optimization problem by weighting the components of the vector function into a single objective (weighted-sum method) or minimizing one objective subject to the others up to certain thresholds~($\varepsilon$-constrained method)~\cite{MEhrgott_2005}. In~\cite{GEichfelder_SRocktaschel_2023}, Eichfelder and Rockt\"aschel provided error bounds on the approximation used to discretize the infinite-dimensional vector space resulting from vectorization. In~\cite{GEichfelder_TGerlach_2019}, Eichfelder and Gerlach identified classes of~SVO that can be formulated as~MOO problems in a finite-dimensional objective space, including box-valued and ball-valued optimization.

    To the best of our knowledge and according to the book by~Khan et~al.~\cite{AAKhan_etal_2015} and survey~\cite{AHHamel_etal_2015}, no stochastic formulations for~SVO have been proposed so far. The deterministic algorithms in~\cite{AAKhan_etal_2015} include a Newton method and an algorithm for solving polyhedral convex~SVO problems. In~2015, Jahn proposed an algorithm that performs enumerative pairwise comparisons between sets~\cite{JJahn_DFO_2015}. In~2021, a deterministic gradient descent method for~SVO problems with a set-valued mapping of finite cardinality was developed by~Bouza et~al.~\cite{GBouza_2021}.

\paragraph{Our contributions.} 
Motivated by the lack of large-scale, application-driven methodologies for set-valued optimization, this paper develops a stochastic SVO framework in which the stochasticity arises from an underlying data distribution. For this purpose, we consider hyperbox-valued optimization (HVO) models, in particular interval-valued optimization (IVO) and rectangle-valued optimization (RVO). HVO is known to admit an equivalent reformulation as a standard vector-valued optimization (VVO) problem with a finite number of vector components. The reformulated VVO problem can be viewed as a multi-objective optimization (MOO) problem with a finite number of objective functions (2~for IVO and 4~for RVO), obtained by treating each component of the objective vector as a separate objective.

Our stochastic set-valued objectives for IVO and RVO are defined through conditional expectation for super and subquantiles and are accessed in practice through random samples or mini-batches. A superquantile is also known as Conditional Value-at-Risk or expected shortfall and enjoys the convex characterization of Rockafellar and Uryasev~\cite{RTRockafellar_SUryasev_2000}.
One contribution of this paper is the introduction and proof of a Rockafellar--Uryasev-type characterization for subquantiles, mirroring the classical representation of superquantiles. 

In robust learning, IVO is appealing because it characterizes model performance via lower and upper assessments of the loss, rather than relying on a single average score, thereby retaining more information and potentially improving robustness on testing data. RVO extends this idea to settings where robustness must balance multiple criteria simultaneously, such as a predictive accuracy and a fairness metric. Building on the corresponding MOO reformulation, we design robust learning objectives from lower- and upper-tail risk measures of the loss via sub-/superquantile-type functionals. The resulting IVO and RVO formulations capture the relationship between uncertainty and solution quality, offering an alternative way to interpret loss-tail behavior and robustness trade-offs compared to traditional robust ML techniques such as bilevel, min--max, and regularization methods. We train the resulting models using stochastic mini-batch multi-gradient descent and then extract a single MOO solution via a curvature-based ``knee'' selection on the Pareto front. Across experiments under distribution shift, the proposed approach produces solutions that are more robust than single-objective empirical risk minimization while maintaining competitive predictive accuracy. 

The remainder of the paper is organized as follows. Section~\ref{sec:background} introduces the notation and background on partial orders for vectors and sets, set-valued mappings, and the set-less relations used to define optimality, along with a brief review of vectorization in set-valued optimization. Section~\ref{sec: hyperbox} focuses on hyperbox-valued optimization and shows how the special hyperbox structure yields an equivalent MOO reformulation with a finite number of objectives. Section~\ref{sec:stoch_bvo} develops stochastic HVO formulations based on subquantiles and superquantiles, including the resulting stochastic interval- and rectangle-valued models as two special cases. (The Rockafellar--Uryasev-type characterization for subquantiles is established in an appendix.)
Section~\ref{sec:robust_learning_bvo} applies these models to robust learning, describes the corresponding training and evaluation procedures, explains how we select a representative Pareto optimal solution, and reports numerical experiments under a distributional shift. Section~\ref{sec:conclusion} concludes and discusses directions for future work.
    
\section{Background}\label{sec:background}

\subsection{Notation and basic definitions}\label{sec:notation_basic_definitions}

Throughout the paper, we denote the non-negative orthant of~$\mathbb{R}^m$ as~$\mathbb{R}^m_{+}$.

\subsubsection{Partial order relations between vectors}
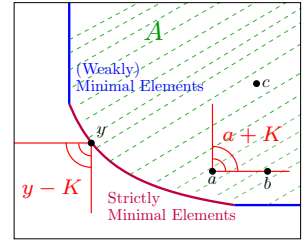
\begin{wrapfigure}[]{R}{0.30\textwidth}
  \centering
  \begin{tikzpicture}[scale=0.55]
\centering
\begin{axis}
[xmin=0, xmax=2.6,
ymin=-0.5, ymax=3,
axis on top=true,
xtick=\empty,
ytick=\empty,
domain=0.1:3,]

\addplot [mark=none, draw=blue, ultra thick, solid, domain=0:0.5] coordinates {(0.5,1.5) (0.5,3)};

\addplot [mark=none, draw=purple, ultra thick, solid, domain=0.5:2] {1/x - 0.5}; 

\node[anchor=south west] at (axis cs:0.8, -0.1) {\textcolor{purple}{Strictly}};
\node[anchor=south west] at (axis cs:0.8, -0.3) {\textcolor{purple}{Minimal Elements}};
\node[anchor=south west] at (axis cs:0.51, 1.8) {\textcolor{blue}{(Weakly)}};
\node[anchor=south west] at (axis cs:0.51, 1.6) {\textcolor{blue}{Minimal Elements}};

\addplot [mark=none, draw=blue, ultra thick, solid, domain=2:3] coordinates {(2,0) (3,0)};

\addplot[style={solid}, draw=darkgreen, dashed, ultra thin, opacity=0.6] coordinates {(2.3, 0) (8, 5.4)};
\addplot[style={solid}, draw=darkgreen, dashed, ultra thin, opacity=0.6] coordinates {(2, 0.1) (8, 5.6)};
\addplot[style={solid}, draw=darkgreen, dashed, ultra thin, opacity=0.6] coordinates {(1.85, 0.15) (8, 6)};
\addplot[style={solid}, draw=darkgreen, dashed, ultra thin, opacity=0.6] coordinates {(1.65, 0.2) (8, 6.2)};
\addplot[style={solid}, draw=darkgreen, dashed, ultra thin, opacity=0.6] coordinates {(1.48, 0.25) (8, 6.6)};
\addplot[style={solid}, draw=darkgreen, dashed, ultra thin, opacity=0.8] coordinates {(1.3, 0.35) (8, 7)};
\addplot[style={solid}, draw=darkgreen, dashed, ultra thin, opacity=0.8] coordinates {(1.05, 0.45) (8, 7.2)};
\addplot[style={solid}, draw=darkgreen, dashed, ultra thin, opacity=0.8] coordinates {(0.87, 0.65) (8, 7.8)};
\addplot[style={solid}, draw=darkgreen, dashed, ultra thin, opacity=0.8] coordinates {(0.75, 0.85) (8, 8.1)};
\addplot[style={solid}, draw=darkgreen, dashed, ultra thin, opacity=0.8] coordinates {(0.68, 1) (8, 8.3)};
\addplot[style={solid}, draw=darkgreen, dashed, ultra thin, opacity=0.8] coordinates {(0.65, 1.27) (8, 8.27)};
\addplot[style={solid}, draw=darkgreen, dashed, ultra thin, opacity=0.8] coordinates {(0.6, 1.5) (8, 8.5)};
\addplot[style={solid}, draw=darkgreen, dashed, ultra thin, opacity=0.8] coordinates {(0.53, 1.75) (8, 8.75)};
\addplot[style={solid}, draw=darkgreen, dashed, ultra thin, opacity=0.8] coordinates {(0.5, 2.0) (8, 9)};
\addplot[style={solid}, draw=darkgreen, dashed, ultra thin, opacity=0.8] coordinates {(0.5, 2.25) (8, 9)};
\addplot[style={solid}, draw=darkgreen, dashed, ultra thin, opacity=0.8] coordinates {(0.5, 2.45) (8, 9)};

\addplot[only marks, mark=*, color=black, mark options={scale=1.0}] coordinates {(2.2, 1.8) (1.8, 0.5) (2.3, 0.5) (0.7, 0.92)};
\path (axis cs:2.2, 1.8) node[right] {\large $c$};
\path (axis cs:1.8, 0.5) node[below] {\large $a$};
\path (axis cs:2.3, 0.5) node[below] {\large $b$};
\path (axis cs:0.7, 0.92) node[above right] {\large $y$};

\draw[red, thick] (axis cs:1.8, 0.5) -- (axis cs:1.8, 1.5); 
\draw[red, thick] (axis cs:1.8, 0.5) -- (axis cs:2.5, 0.5); 

\draw [red, thick] (axis cs:1.92, 0.5) coordinate (angle) arc[start angle=0, end angle=90, radius=0.3cm];
\draw [red, thick] (axis cs:2.03, 0.5) coordinate (angle) arc[start angle=0, end angle=90, radius=0.6cm];



\draw[red, thick] (axis cs:0.7, 0.92) -- (axis cs:0.7, -0.12); 
\draw[red, thick] (axis cs:0.7, 0.92) -- (axis cs:-0.3, 0.92); 

\draw [red, thick] (axis cs:0.7, 0.73) coordinate (angle) arc[start angle=-90, end angle=-180, radius=0.3cm];
\draw [red, thick] (axis cs:0.7, 0.56) coordinate (angle) arc[start angle=-90, end angle=-180, radius=0.6cm];

\end{axis}


\node[text width=1cm,font=\small] at (4, 5) { \textcolor{darkgreen}{$A$}};

\node[text width= 2cm, font=\scriptsize] at (2.0, 1.2) { \textcolor{red}{$y - K$}};

\node[text width= 2cm, font=\scriptsize] at (6.8, 2.5) { \textcolor{red}{$a + K$}};

\end{tikzpicture}
  \caption{Partial order relations in~$\mathbb{R}^2$ for $K = \mathbb{R}^2_+$.}\label{fig:VVO}
\end{wrapfigure}
Let~$a$, $b$, and~$c$ be vectors in~$A \subseteq \mathbb{R}^m$ (see Fig.~\ref{fig:VVO}). A closed, convex, and pointed cone~$K \subset \mathbb{R}^m$ induces a partial order among such vectors. Specifically, we denote~$a \le_K b$ if~$b - a \in K$, which means that~$a$ \textit{(weakly) dominates}~$b$ with respect to (w.r.t.)~$K$.
If~$K$ has non-empty interior~$\interior(K)$, $K$ also induces a strict partial order, in the sense that~$a <_K c$ if~$c - a \in \interior(K)$, which means that~$a$ \textit{strictly dominates}~$c$ w.r.t.~$K$. Note that when~$K = \mathbb{R}^m_{+}$, which denotes the non-negative orthant of~$\mathbb{R}^m$, $a \le_K b$ implies~$a_i \le b_i$ and~$a <_K c$ implies~$a_i < c_i$, for all~$i \in \{1,\ldots,m\}$.

We say that a point~$y \in A$ is a \textit{strictly minimal element} of~$A$ w.r.t~$K$ if one of the following equivalent relations is satisfied
\begin{gather}
(y - K) \cap A = \{y\} \nonumber\\
\Updownarrow \nonumber\\
\not\exists z \in A \text{ such that } z \le_K y \text{ and } z \ne y \label{eq:v50}\\
\Updownarrow \nonumber\\
\forall z \in A \text{ such that } z \le_K y, \text{ we have } z = y \nonumber\\
\Updownarrow \nonumber\\
\forall z \in A \text{ such that } z \le_K y, \text{ we have } y \le_K z.\label{eq:v10}
\end{gather}
 
 Similarly, a point~$y \in A$ is a \textit{(weakly) minimal element} of~$A$ w.r.t.~$K$ if one of the following equivalent relations is satisfied
\begin{gather}
(y - \interior(K)) \cap A = \varnothing \nonumber\\
\Updownarrow \nonumber\\
\not\exists z \in A \text{ such that } z <_K y. \label{eq:v20}
\end{gather}

\subsubsection{Partial order relation between sets}
     Let~$\mathcal{F}$ be a family of subsets of $\mathbb{R}^m$ and let us consider non-empty sets~$A \in \mathcal{F}$ and~$B \in \mathcal{F}$. A closed, convex, and pointed cone~$K$ induces a partial order between such sets. 
     Among the most popular set relations, one can include (see Fig.~\ref{fig:SVO}~(a)):
    \begin{equation*}
    \begin{split}
    \text{upper set-less:} \quad & A \preceq^{u}_K B \\ & \forall a \in A, \exists b \in B \text{ such that } a \le_K b \iff A \subset B - K, \\\vspace{0.2cm}
    \text{lower set-less:} \quad & A \preceq^{\ell}_K B \\ & \forall b \in B, \exists a \in A \text{ such that } a \le_K b \iff B \subset A + K, \\
    \text{set-less:} \quad & A \preceq_K B \iff A \preceq^{u}_K B \text{ and } A \preceq^{\ell}_K B.\\
    \end{split}
    \end{equation*}

    By replacing~$\preceq_K$ with~$\prec_K$, one can consider corresponding strict set relations, where~$K$ is replaced by its interior~$\interior(K)$. The corresponding strict set relations are defined as follows:
    \begin{equation*}
    \begin{split}
    \text{upper set-less:} \quad & A \prec^{u}_K B \\ & \forall a \in A, \exists b \in B \text{ such that } a <_K b \iff A \subset B - \interior(K), \\\vspace{0.2cm}
    \text{lower set-less:} \quad & A \prec^{\ell}_K B \\ & \forall b \in B, \exists a \in A \text{ such that } a <_K b \iff B \subset A + \interior(K), \\
    \text{set-less:} \quad & A \prec_K B \iff A \prec^{u}_K B \text{ and } A \prec^{\ell}_K B.\\
    \end{split}
    \end{equation*}
    In this paper, we will use the set-less relation. However, all the definitions regarding set relations can also be formulated using the upper or lower set-less relations. 
    
    We say that~$A \in \mathcal{F}$ is a \textit{strictly minimal element} of~$\mathcal{F}$ w.r.t.~$K$ if
    \begin{equation}\label{eq:s10}
        \forall B \in \mathcal{F} \text{ such that } B \preceq_K A, \text{ we have } A \preceq_K B.
    \end{equation}
    Note that~\eqref{eq:s10} corresponds to~\eqref{eq:v10} in terms of set relations. 
    We say that~$A \in \mathcal{F}$ is a \textit{(weakly) minimal element} of~$\mathcal{F}$ w.r.t.~$K$ if
    \begin{equation}\label{eq:s20}
        \nexists B \in \mathcal{F} \text{ such that } B \prec_K A.
    \end{equation}
    Note that~\eqref{eq:s20} corresponds to~\eqref{eq:v20} in terms of set relations. 

\subsubsection{Vector-valued optimization and stochastic multi-objective optimization}

    Given a vector function~$F: \mathbb{R}^n \to \mathbb{R}^m$ and a closed, convex, and pointed cone~$K \subset \mathbb{R}^m$, let us consider the following vector-valued optimization~(VVO) problem
    \begin{equation}\label{prob:vvo}
    \le_K \shortdash \min _{x \in X} F(x) \; = \; (f_1(x), \ldots, f_m(x)),
    \end{equation}
    where~$X \subseteq \mathbb{R}^n$. When~$K=\mathbb{R}^m_{+}$, a~VVO problem can also be referred to as a multi-objective optimization problem.
    Denoting the image of the set~$X$ under the vector function~$F$ as~$F(X) = \cup_{x \in X} \{F(x)\}$.
    We say that~$x^* \in X$ is a \textit{strictly efficient solution} (or strict Pareto optimal solution, if~$K=\mathbb{R}^m_{+}$) to problem~\eqref{prob:vvo} if~$F(x^*)$ is a strictly minimal element of~$F(X)$ w.r.t.~$K$.
    We say that~$x^* \in X$ is a \textit{weakly efficient solution} (or weak Pareto optimal solution, if~$K=\mathbb{R}^m_{+}$) to problem~\eqref{prob:vvo} if~$F(x^*)$ is a weakly minimal element of~$F(X)$ w.r.t.~$K$. When~$K=\Rmbb_+$, the Pareto front of problem (2.6) is defined as the image of the set of Pareto optimal solutions under the vector function~$F$. Each point on the Pareto front is referred to as a nondominated point.
 
     Assume that~$K=\mathbb{R}^m_{+}$ and each objective function~$f_i(\cdot)$ is defined as the expected value of a stochastic function~$h_i(\cdot, \xi)$, which depends on a random variable~$\xi$ within a probability space with probability measure independent of~$x$. One can formulate problem~\eqref{prob:vvo} as the following stochastic~MOO problem
     
    \vspace{-0.3cm}
    \begin{equation}
         \min_{x \in X} \; F(x) = (f_1(x), \ldots, f_m(x)) \; = \;  (\mathbb{E}[h_1(x,\xi)], \ldots, \mathbb{E}[h_m(x, \xi)]), \vspace{-0.05cm}
    \label{stochastic_moo}
    \end{equation}
    Problem~\eqref{stochastic_moo} can be solved by applying stochastic approximation~(SA) methods.
    The earliest prototypical~SA algorithm is known as the \textit{stochastic gradient~(SG) algorithm}~\cite{HRobbins_SMonro_1951,KLChung_1954,JSacks_1958}, designed for single-objective problems (i.e., $m=1$). Its update step at the~$k$-th iteration is defined by~$x_{k+1} = x_k - \alpha_k g_i(x_k, \xi_k)$, where~$g_i(x_k,\xi_k)$ is a stochastic gradient generated according to~$\xi_k$ (e.g., ~$g_i(x_k,\xi_k) = \nabla h_i(x_k, \xi_k)$) and $\alpha_k$ is a positive stepsize.    

\begin{figure*}[ht]
  \centering
  \vskip 0cm
  \subfloat[Partial order relations in $\mathbb{R}^2$ \\ for $K = \mathbb{R}^2_+$ (note that $A \prec_K B$).]{\begin{tikzpicture}

\draw[->] (0,0) -- (4.1,0) node[right] {};
\draw[->] (0,0) -- (0,4) node[above] {};

\draw[fill=purple!20] (2,1.5) ellipse [x radius=0.5, y radius=1.0];
\node at (2,1.5) {$\text{A}$};

\draw[fill=green!20] (3.5,2.5) ellipse [x radius=0.4, y radius=0.8];
\node at (3.5,2.5) {$\text{B}$};

\draw[fill=purple!20] (1.1,1.5) ellipse [x radius=0.3, y radius=0.4];
\node at (2,1.5) {$\text{A}$};
\draw[fill=purple!20] (0.7,2.0) ellipse [x radius=0.5, y radius=0.2];
\draw[fill=blue!20] (0.35,2.8) ellipse [x radius=0.2, y radius=0.4];
\draw[fill=blue!20] (3.2,0.8) ellipse [x radius=0.4, y radius=0.3];

\draw[dashed, purple, thin] (1.5,1.5) -- (1.5,4);

\draw[dashed, purple, thin] (2.0,0.5) -- (4.1,0.5);

\draw[dashed, darkgreen, thin] (3.9,2.5) -- (3.9,0);

\draw[dashed, darkgreen, thin] (3.4,3.3) -- (0,3.3);




\node[right] at (0.02, 0.9) {\tiny \textcolor{purple}{Strictly}};
\node[right] at (0.02, 0.7) {\tiny \textcolor{purple}{Minimal Elements}};
\node[right] at (0.5, 3.0) {\tiny \textcolor{blue}{(Weakly)}};
\node[right] at (0.5, 2.8) {\tiny \textcolor{blue}{Minimal Element}};

\end{tikzpicture}} \hspace{0.3cm}
  \subfloat[When~$m=1$, $x_*$ is a strictly minimal solution of $S$ w.r.t. $K = \mathbb{R}_+$.]{\begin{tikzpicture}

\draw[->] (0,0) -- (5.4,0) node[above] {$x$};
\draw[->] (0,0) -- (0,4) node[right] {$S$};

\draw[thick] plot[domain=1:5] (\x,{(\x-3)*(\x-3)/4 + 2.5}) node[right] {};
\draw[thick] plot[domain=1:5] (\x,{(\x-3)*(\x-3)/4 + 0.5}) node[right] {};

\fill[pattern=north east lines, pattern color=black, opacity=0.4] (1,{(1-3)*(1-3)/4 + 0.5}) -- plot[domain=1:5] (\x,{(\x-3)*(\x-3)/4 + 0.5}) -- (5,{(5-3)*(5-3)/4 + 2.5}) -- plot[domain=5:1] (\x,{(\x-3)*(\x-3)/4 + 2.5}) -- cycle;

\draw[solid, thick,  purple] (3,0.5) -- (3,2.5) node[above, black] {\color{purple} $S(x_*)$};
\draw[solid, thick,  darkgreen] (4.5,1.0625) -- (4.5,3.0625) node[above, black] {\color{darkgreen} $S(\bar{x})$};

\filldraw[black] (3,0) circle (2pt) node[below] {$x_*$};
\filldraw[black] (4.5,0) circle (2pt) node[below] {$\bar{x}$};
\node at (1, 2) {$Graph(S)$};





\end{tikzpicture}} \hspace{0.3cm}
    \subfloat[Illustration of the first inequality in~\eqref{eq:set_less_vector}.]{\begin{tikzpicture}

\draw[->] (0,0) -- (4.1,0) node[right] {};
\draw[->] (0,0) -- (0,4) node[above] {};

\draw[fill=purple!20] (2,1.5) ellipse [x radius=0.5, y radius=1.0];
\node at (2,1.5) {$\text{A}$};

\draw[fill=green!20] (3.5,2.5) ellipse [x radius=0.4, y radius=0.8];
\node at (3.5,2.5) {$\text{B}$};





\draw[orange, thick] (2.85, -0.525*2.85 + 1.5) -- (0, 1.5) node[right, font=\tiny, color=black] {$\inf_{a \in A} \ell^\top a$}; 
\draw[orange, thick] (4.1, -0.525*4.1 + 3.5) -- (0, 3.5) node[right, font=\tiny, color=black] {$\inf_{b \in B} \ell^\top b$};

\draw[orange, thick, ->] (0, 0) -- (0.4, 0.8) node[right] {$\ell$}; 

\draw[red, thick] (0, 0) -- (0, 1); 
\draw[red, thick] (0, 0) -- (1, 0); 

\draw [red, thick] (0.3, 0) coordinate (angle) arc[start angle=0, end angle=90, radius=0.3cm];
\draw [red, thick] (0.6, 0) coordinate (angle) arc[start angle=0, end angle=90, radius=0.6cm];

\node[text width= 2cm, font=\scriptsize] at (1.65, 0.2) { \textcolor{red}{$K^* = \mathbb{R}^2_+$}};

\draw (-0.05, 1.5) -- (0.05, 1.5); 
\draw (-0.05, 3.5) -- (0.05, 3.5); 

\end{tikzpicture}}\vspace{0.1cm}
  \caption{Key concepts in~SVO.}\label{fig:SVO}
\end{figure*}
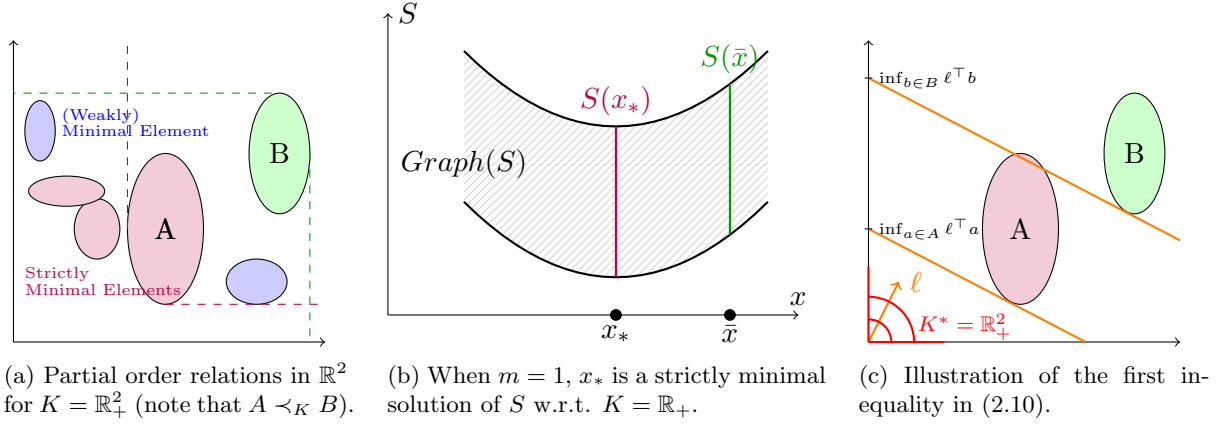

\subsubsection{Set-valued mappings and set-valued optimization}

    Given~$X \subseteq \mathbb{R}^n$, a set-valued mapping~$S: X \rightrightarrows \mathbb{R}^m$ associates a set~$S(x) \subset \mathbb{R}^m$ with each~$x \in X$. The graph of~$S$ is denoted as~$Graph(S) = \{(x,u) \; | \; u \in S(x)\} \subseteq X \times \mathbb{R}^m$.  
    When~$S(x)$ is a singleton for all~$x \in X$, $S$ is referred to as a single-valued function.
    
    Let us consider the~SVO problem in~\eqref{prob:svo}.  
    Moreover, let us denote the image of the set~$X$ under the set-valued function~$S$ as~$S(X) = \cup_{x \in X} S(x)$. In the literature, two approaches have been proposed to define the optimal solution of such a problem: the set approach and the vector approach. In the set approach, we say that~$x^* \in X$ is a \textit{strictly (weakly) minimal solution} to problem~\eqref{prob:svo} if~$S(x^*)$ is a strictly (weakly) minimal element of the family of subsets~$\{S(x)\}_{x \in X}$ w.r.t.~$K$ (see Fig.~\ref{fig:SVO}~(b) for an example).
    
    In the vector approach, we say that~$x^* \in X$ is a strictly (weakly) minimal solution of problem~\eqref{prob:svo} if~$S(x^*)$ contains a strictly (weakly) minimal element of~$S(X)$ w.r.t.~$K$ in the vector sense. In this paper, we focus on the set approach, which is the most popular~\cite{GBouza_2021}. 

\subsection{Vectorization for set-valued optimization} 

Vectorization reduces an~SVO problem to a~VVO problem by reducing the comparison of two sets to the pointwise comparison of suitable vector functions~\cite{JJahn_2015}. Vectorization for~SVO plays the same role that scalarization plays for~VVO. Scalarization reduces a~VVO problem to a real-valued optimization problem by weighting the components of the vector function into a single objective or minimizing one objective subject to the others up to certain thresholds~\cite{MEhrgott_2005}.

Let~$K \subseteq \mathbb{R}^m$ be a closed, convex, and pointed cone. Let~$K^*$ be the dual cone of~$K$, i.e., $K^* = \{\ell \in \mathbb{R}^m ~|~ \ell^\top y \ge 0, \ \forall y \in K\}$. Note that the dual cone of~$K = \mathbb{R}^m_{+}$ is~$K^* = \mathbb{R}^m_{+}$. Moreover, let~$\mathcal{F}$ be a family of subsets of $\mathbb{R}^m$ and let us consider non-empty sets~$A \in \mathcal{F}$ and~$B \in \mathcal{F}$. Assuming that both~$A+K$ and~$B-K$ are closed and convex, we have from~\cite[Lemma 2.1 and Theorem~2.1]{JJahn_2015} (see Fig.~\ref{fig:SVO}~(c)) that
\begin{alignat}{2}
A \preceq^{\ell}_K B \quad & \Longleftrightarrow \quad \inf _{a \in A} \ell^\top a \leq \inf _{b \in B} \ell^\top b, \ \forall \ell \in K^* \backslash\left\{0\right\}, \label{eq:ltype_vector}\\
A \preceq^{u}_K B \quad & \Longleftrightarrow \quad \sup _{a \in A} \ell^\top a \leq \sup _{b \in B} \ell^\top b, \ \forall \ell \in K^* \backslash\left\{0\right\}, \label{eq:utype_vector}\\
A \preceq_K B \quad & \Longleftrightarrow \quad \inf _{a \in A} \ell^\top a \leq \inf _{b \in B} \ell^\top b \ \text{ and } \ \sup _{a \in A} \ell^\top a \leq \sup _{b \in B} \ell^\top b, \ \forall \ell \in K^* \backslash\left\{0\right\}. \label{eq:set_less_vector}
\end{alignat}

Based on~\cite[Corollary~2.2]{JJahn_2015}, one can apply definition~\eqref{eq:s10} with~\eqref{eq:set_less_vector} to obtain a characterization for sets that are strictly minimal elements. In particular, we have that a set~$A$ is a strictly minimal element of~$\mathcal{F}$ w.r.t.~$K$ if and only if there is no~$B$ such that
\begin{equation}\label{theorem:corollary_2.2-1}
    \begin{alignedat}{2}
        &\sup_{b \in B} \ell^{\top} b \le \sup_{a \in A} \ell^{\top} a \ \text{ and }~\inf_{b \in B} \ell^{\top} b \le \inf_{a \in A} \ell^{\top} a, \text{ for all }~\ell \in K^* \backslash\left\{0\right\}, \text{ and }\\
        &\sup_{b \in B} \ell^{\top} b < \sup_{a \in A} \ell^{\top} a \ \text{ or }~\inf_{b \in B} \ell^{\top} b < \inf_{a \in A} \ell^{\top} a, \ \text{ for some } \ell \in K^* \backslash\left\{0\right\}.
    \end{alignedat}
\end{equation}

Instead of~$K^* \backslash\left\{0\right\}$, one can consider~$\tilde{K} = K^* \cap \{y \in \mathbb{R}^m ~|~ \|y\| = 1\}$, and then a discretization~$\{\ell_1, \ldots, \ell_p\} \subset \tilde{K}$. Therefore, as a consequence of~\eqref{eq:set_less_vector}, one has to solve~$4$ optimization subproblems (possibly in parallel) for each~$\ell_i$, with~$i \in \{1,\ldots,p\}$. When~$A$ and~$B$ are polyhedrons (which includes the case of intervals), the subproblems are~LPs and the discretization~$\{\ell_1, \ldots, \ell_p\}$ can be coarser than in the general case~\cite{JJahn_DFO_2015}.

For any non-empty set~$A \in \mathcal{F}$, let us define the following vector-valued functions 
\begin{equation}\label{eq:v_func}
\begin{alignedat}{2}
v_{\ell}(A) ~&=~ \left(\begin{array}{c}
\sup _{a \in A} \ell^\top a \\
\inf _{a \in A} \ell^\top a
\end{array}\right), \forall \ell \in K^* \backslash\left\{0\right\}, \ \text{ and } \\
v(A) ~&=~ (v_{\ell}(A) ~|~ \ell \in K^* \backslash\left\{0\right\}).
\end{alignedat}
\end{equation}
Note that~$v(\cdot)$ is a vector-valued function with an infinite number of component functions. Let us now introduce the following cone
\[
K_{\Pi} = \prod_{\ell \in K^* \backslash\left\{0\right\}} K_{\ell}, \text{ with } K_{\ell} = \mathbb{R}_{+}^2,
\]
which is given by the Cartesian product of an infinite number of two-dimensional non-negative orthants. Based on~\eqref{eq:ltype_vector}--\eqref{eq:set_less_vector} and~\eqref{eq:v_func}, vectorization reduces the comparison of two sets to the pointwise comparison of vector functions as follows 
\begin{equation}\label{eq:1000}
A \preceq_K B \quad
\Longleftrightarrow \quad v_{\ell}(A) \le_{\mathbb{R}_+^2} v_{\ell}(B), \ \forall \ell \in K^* \backslash\left\{0\right\} \quad
\Longleftrightarrow \quad v(A) \le_{K_{\Pi}} v(B),
\end{equation}
where~$A \in \mathcal{F}$ and~$B \in \mathcal{F}$ are non-empty sets.

We can now include Theorem~\ref{theorem:vectorization} below, provided in~\cite[Theorem 3.1]{JJahn_2015}, which states that solving an~SVO problem amounts to solving a corresponding~VVO problem.  

\begin{theorem}[Vectorization theorem]\label{theorem:vectorization}
Let~$S(x)+K$ and~$S(x)-K$ be closed and convex for all~$x \in X$. Then, $\bar{x} \in X$ is a strictly minimal solution to problem~\eqref{prob:svo} if and only if~$\bar{x}$ is a strictly efficient solution to the 
following~VVO problem 
\begin{equation}\label{prob:vectorization}
\le_{K_{\Pi}} \shortdash \min _{x \in X} V(x), \quad \text{ where }~V(x) = v(S(x)), \text{ for all }~x \in X.
\end{equation}
\end{theorem}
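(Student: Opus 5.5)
The plan is to reduce both notions of optimality to the same family of order relations, using the equivalence \eqref{eq:1000}. Both are stated for the family $\mathcal{F}=\{S(x)\}_{x\in X}$. By definition, $\bar x$ is a strictly minimal solution of \eqref{prob:svo} if and only if $S(\bar x)$ satisfies \eqref{eq:s10} within $\mathcal{F}$:
\begin{equation*}
\text{for all } x\in X \text{ with } S(x)\preceq_K S(\bar x), \quad S(\bar x)\preceq_K S(x).
\end{equation*}
By definition, $\bar x$ is a strictly efficient solution of \eqref{prob:vectorization} if and only if $V(\bar x)$ is a strictly minimal element of $V(X)$ with respect to $K_\Pi$, in the form \eqref{eq:v10}:
\begin{equation*}
\text{for all } x\in X \text{ with } V(x)\le_{K_\Pi} V(\bar x), \quad V(\bar x)\le_{K_\Pi} V(x).
\end{equation*}

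The first step is to confirm that \eqref{eq:1000} applies to every pair $A=S(x)$, $B=S(\bar x)$ (and with the roles reversed). This needs $A+K$ and $B-K$ to be closed and convex, which is exactly the hypothesis of the theorem. Nonemptiness of $S(x)$ is implicit in the setting, and I would state it. Here I would briefly check that the one-sided equivalences \eqref{eq:ltype_vector} and \eqref{eq:utype_vector} only use closed convexity of $A+K$ and of $B-K$ respectively. This is a separation argument: if some $b\in B$ lies outside the closed convex set $A+K$, separate it by a hyperplane with normal $\ell$; because $A+K$ is invariant under adding $K$, $\ell$ lies in $K^*\setminus\{0\}$, and this gives $\inf_{a\in A}\ell^\top a>\ell^\top b$. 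The converse direction is immediate from $\ell^\top k\ge 0$ for $k\in K$. Since the paper cites these facts from \cite{JJahn_2015}, I would invoke them rather than reprove them.

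Once \eqref{eq:1000} is in hand, $S(x)\preceq_K S(\bar x)$ holds if and only if $V(x)\le_{K_\Pi} V(\bar x)$, and $S(\bar x)\preceq_K S(x)$ holds if and only if $V(\bar x)\le_{K_\Pi} V(x)$. Substituting these into the two displayed conditions makes them literally identical, which proves both directions at once.

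The main subtlety is that the ordering in \eqref{eq:v10} need not be antisymmetric. The infinite product cone $K_\Pi$ is pointed, so $\le_{K_\Pi}$ is antisymmetric. The set relation $\preceq_K$, however, is only a preorder, since distinct sets can have the same $v$-image. One must therefore use the formulation \eqref{eq:v10}, with mutual domination, rather than the form $z=y$. I would remark that $v(A)=v(B)$ means exactly $A\preceq_K B$ and $B\preceq_K A$, so the two notions still match under this reading. A second, minor point is that the suprema may equal $+\infty$ and the infima $-\infty$, so $v$ takes values in extended reals. I would note that the componentwise comparisons in \eqref{eq:1000} remain meaningful in that case.
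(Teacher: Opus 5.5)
Your proposal is correct and matches the route the paper relies on: the paper gives no proof of its own but cites Jahn's Theorem~3.1, whose proof translates the set relations into componentwise inequalities via \eqref{eq:1000} (equivalently the characterization \eqref{theorem:corollary_2.2-1}) and then matches the definition \eqref{eq:s10} of strict minimality with that of strict efficiency. Your use of the mutual-domination form \eqref{eq:v10}, rather than the ``no $z \le_{K_\Pi} y$ with $z \neq y$'' form that underlies \eqref{theorem:corollary_2.2-1}, is an equivalent bookkeeping choice, since $K_\Pi$ is pointed.
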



In problem~\eqref{prob:vectorization}, the vector-valued objective function is given by an infinite number of component functions. In~\cite{JJahn_DFO_2015}, the author shows that the number of component functions is finite if the sets~$S(x)$ are polyhedral, which includes the case when the sets~$S(x)$ are given by intervals or rectangles.

\section{Hyperbox-valued optimization} \label{sec: hyperbox}
In this section, we consider the general hyperbox-valued optimization (HVO) problem, where the dimension is $m = k \in \mathbb{N}^+$ and $k \geq 2$. Given mappings
\[
F_{lb}:\mathbb{R}^n \rightarrow \mathbb{R}^k, 
\quad F_{ub}:\mathbb{R}^n \rightarrow \mathbb{R}^k,
\]
satisfying $F_{lb}(x) \leq_K F_{ub}(x)$ for all $x \in X$ (where $K = \mathbb{R}^k_+$), we define a hyperbox as follows
\begin{equation*}
[F_{lb}(x),F_{ub}(x)] 
= (\{F_{lb}(x)\} + \mathbb{R}_+^k) 
\cap (\{F_{ub}(x)\} - \mathbb{R}_+^k).
\end{equation*}
The HVO problem is then formulated as
\begin{equation} \label{HVO}
\preceq_K \text{-} \min_{x\in X} S(x) 
\quad\text{with}\quad S(x) = [F_{lb}(x),F_{ub}(x)].
\end{equation}

The vectorization theorem for SVO stated in Theorem~\ref{theorem:vectorization} says that a set-valued optimization problem is equivalent to a vector-valued optimization problem with a number of objectives that can be infinite. In particular cases like the HVO, this number can be shown to be finite. For that purpose, we first describe a result (first stated in~\cite{hernandez2017some} and later proved in~\cite{GEichfelder_TGerlach_2019}) explaining how to directly compare any two hyperboxes of the type defined above.

\begin{proposition}[Comparison of hyperboxes] \label{prop:hyper_compare}
Let $A, B \subseteq \mathbb{R}^k$ be hyperboxes,
\[
A \coloneqq [a^1,b^1] = (\{a^1\} + \mathbb{R}_+^k) \cap (\{b^1\} - \mathbb{R}_+^k), \quad
B \coloneqq [a^2,b^2] = (\{a^2\} + \mathbb{R}_+^k) \cap (\{b^2\} - \mathbb{R}_+^k),
\]
with $a^1,a^2,b^1,b^2 \in \mathbb{R}^k$ satisfying $a^1 \leq_K b^1$ and $a^2 \leq_K b^2$.  
Then:
\[
A \preceq_K^l B \iff a^1 \leq_K a^2
\quad\text{and}\quad
A \preceq_K^u B \iff b^1 \leq_K b^2.
\]
\end{proposition}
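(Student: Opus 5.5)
The plan is to prove both equivalences directly from the set-inclusion forms of the set-less relations given in Section~\ref{sec:notation_basic_definitions}. The lower relation reads $A \preceq^{\ell}_K B \iff B \subset A + K$, and the upper relation reads $A \preceq^{u}_K B \iff A \subset B - K$, with $K=\mathbb{R}^k_+$. The hyperbox structure gives the key facts: $a^1$ is the componentwise least element of $A$, and $b^1$ is its componentwise greatest element. Both are nonempty-box statements that follow from $a^1\le_K b^1$, since $a^1, b^1 \in A$. We will not need the vectorization results \eqref{eq:ltype_vector}--\eqref{eq:set_less_vector}, because the argument is elementary. Still, as a cross-check, one could use \eqref{eq:ltype_vector} with $\inf_{a\in A}\ell^\top a=\ell^\top a^1$ for $\ell\ge 0$, after noting that $A+K=\{a^1\}+K$ is closed and convex.

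For the lower relation, first assume $a^1\le_K a^2$. Take any $b\in B$. Then $b\in\{a^2\}+K$, and since $a^2 \in \{a^1\}+K$ and $K+K=K$, we get $b\in\{a^1\}+K$. It remains to see that $\{a^1\}+K = A+K$. The inclusion $\supseteq$ holds because every $a\in A$ satisfies $a\ge a^1$, and $\subseteq$ holds because $a^1\in A$. Hence $B\subset A+K$.

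Conversely, assume $B\subset A+K$. Since $a^2\le_K b^2$, we have $a^2\in B$, so $a^2=a+c$ for some $a\in A$ and $c\in K$. As $a\ge a^1$ componentwise, it follows that $a^2\ge a^1$, that is, $a^1\le_K a^2$.

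The upper relation is handled symmetrically. The analogous identity is $B-K=\{b^2\}-K$, which holds because every element of $B$ lies below $b^2$ and $b^2\in B$. If $b^1\le_K b^2$, then every $a\in A$ satisfies $a\le b^1\le b^2$, so $A\subset \{b^2\}-K = B-K$. Conversely, if $A\subset B-K$, apply this to the point $b^1\in A$: we get $b^1\le b$ for some $b\in B$, and $b\le b^2$, so $b^1\le_K b^2$.

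There is no real obstacle here. The only point that needs care is checking that the extreme points $a^i, b^i$ actually belong to their boxes, which is exactly where the hypotheses $a^i\le_K b^i$ are used. One should also use the transitivity of $\le_K$, which follows from the convexity $K+K=K$. If one wants to connect this to the support-function characterization, the step to verify is that $\inf_{a\in A}\ell^\top a=\ell^\top a^1$ for every $\ell\in\mathbb{R}^k_+$. That follows from the same componentwise minimality.
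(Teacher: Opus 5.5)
Your proof is correct. Each of the four implications follows as you describe from three ingredients:
\begin{itemize}
\item the inclusion forms $B\subset A+K$ and $A\subset B-K$ of the two set-less relations;
\item the memberships $a^i,b^i\in[a^i,b^i]$, which is exactly where the hypotheses $a^i\le_K b^i$ are used;
\item the identities $A+K=\{a^1\}+K$ and $B-K=\{b^2\}-K$.
\end{itemize}

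The paper gives no proof of this proposition. It states it and defers to \cite{hernandez2017some} and \cite{GEichfelder_TGerlach_2019}, so your argument supplies a self-contained verification the paper omits. The only related tool the paper develops is Jahn's support-function vectorization \eqref{eq:ltype_vector}--\eqref{eq:utype_vector}, which you mention as a cross-check. Your direct route is better than that one in two respects.

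\begin{enumerate}
\item It needs no closedness or convexity of $A+K$ and $B-K$, and no duality.
\item It never uses the orthant structure. Your steps only use $0\in K$ and $K+K\subseteq K$. The same argument therefore proves the equivalences verbatim for boxes $(\{a\}+K)\cap(\{b\}-K)$ built from any convex cone $K$. More generally, it proves them for any two sets that each contain a $K$-least element (for $\preceq^{\ell}_K$) or each contain a $K$-greatest element (for $\preceq^{u}_K$).
\end{enumerate}
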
 

The following result describes the equivalent vector-valued optimization problem for HVO. As noted in~\cite{GEichfelder_TGerlach_2019}, its proof follows directly from Proposition~\ref{prop:hyper_compare}.

\begin{theorem}[Vectorization for HVO]\label{thm:vectorize_HVO}
Consider the HVO problem~\eqref{HVO} with hyperboxes $S(x)=[F_{lb}(x),F_{ub}(x)] = (\{F_{lb}(x)\}+\Rmbb_+^k)\cap(\{F_{ub}(x)\}-\Rmbb_+^k)$.
Let $K=\Rmbb_+^k$.
Then the following statements are equivalent:
\begin{enumerate}
\item[(i)] $\bar{x}\in X$ is a strictly minimal solution to~\eqref{HVO} with respect to $K$.
\item[(ii)] $\bar{x}\in X$ is a strictly efficient solution of the following vector-valued optimization problem
\begin{equation}\label{eq:vec-KK}
\le_{K\times K}\text{-}\min_{x\in X}
\begin{pmatrix}
F_{ub}(x)\\[2pt]
F_{lb}(x)
\end{pmatrix},
\end{equation}
with respect to~$K\times K$.
\end{enumerate}
In particular, since we are considering~$K\times K=\Rmbb_+^{2k}$, the vector-valued optimization problem~\eqref{eq:vec-KK} is further equivalent to
\begin{equation}\label{eq:vec-R2k}
\le_{\Rmbb_+^{2k}}\text{-}\min_{x\in X}
\begin{pmatrix}
F_{ub}(x)\\[2pt]
F_{lb}(x)
\end{pmatrix}.
\end{equation}
\end{theorem}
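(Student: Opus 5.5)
The plan is to reduce both notions of optimality to the same order on $\mathbb{R}^{2k}$ using Proposition~\ref{prop:hyper_compare}. For $x\in X$, write $G(x)=(F_{ub}(x),F_{lb}(x))\in\mathbb{R}^{2k}$. Since $F_{lb}(x)\le_K F_{ub}(x)$, each $S(x)$ is a hyperbox of exactly the type covered by the proposition. Applying it to $A=S(y)$ and $B=S(x)$ gives
\begin{equation*}
S(y)\preceq_K S(x) \iff F_{lb}(y)\le_K F_{lb}(x) \text{ and } F_{ub}(y)\le_K F_{ub}(x) \iff G(y)\le_{K\times K} G(x).
\end{equation*}
The first equivalence uses the definition of $\preceq_K$ as the conjunction of $\preceq^{\ell}_K$ and $\preceq^{u}_K$. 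The second uses the fact that the product order on $K\times K$ is componentwise.

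With this equivalence in hand, I would rewrite both definitions in parallel form. By~\eqref{eq:s10}, $\bar x$ is strictly minimal for~\eqref{HVO} if and only if, for every $y\in X$,
\begin{equation*}
S(y)\preceq_K S(\bar x) \quad\text{implies}\quad S(\bar x)\preceq_K S(y).
\end{equation*}
By~\eqref{eq:v10}, $\bar x$ is strictly efficient for~\eqref{eq:vec-KK} if and only if, for every $y\in X$,
\begin{equation*}
G(y)\le_{K\times K}G(\bar x) \quad\text{implies}\quad G(\bar x)\le_{K\times K}G(y).
\end{equation*}
The equivalence above turns each hypothesis and each conclusion of the first statement into the corresponding one of the second. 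Hence (i)$\iff$(ii).

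One point needs care. Definition~\eqref{eq:v10} is stated as "$z=y$", whereas~\eqref{eq:s10} only requires mutual domination. Mutual domination $G(y)\le G(\bar x)\le G(y)$ forces $G(y)=G(\bar x)$ by pointedness of $K\times K$. Likewise, mutual set-domination forces $S(y)=S(\bar x)$, because a hyperbox determines its corner points. So the formulations in~\eqref{eq:v50}–\eqref{eq:v10} and~\eqref{eq:s10} are consistent, and no extra cases arise.

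The final claim follows by identifying $K\times K=\mathbb{R}^k_+\times\mathbb{R}^k_+$ with $\mathbb{R}^{2k}_+$. Under this identification the orders $\le_{K\times K}$ and $\le_{\mathbb{R}^{2k}_+}$ coincide, so~\eqref{eq:vec-KK} and~\eqref{eq:vec-R2k} are literally the same problem.

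The only substantive step is Proposition~\ref{prop:hyper_compare}, which is assumed. Beyond it, the main obstacle is bookkeeping: matching the roles of $A$ and $B$ to the direction of domination, so that $S(y)\preceq_K S(\bar x)$ and not the reverse is what gets translated.
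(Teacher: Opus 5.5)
Your proposal is correct and takes the same route the paper indicates: the paper gives no separate argument, only noting (after Eichfelder and Gerlach) that the result follows directly from Proposition~\ref{prop:hyper_compare}. Your translation $S(y)\preceq_K S(x)\iff G(y)\le_{K\times K}G(x)$, substituted into \eqref{eq:s10} and \eqref{eq:v10}, together with the identification $K\times K=\mathbb{R}^{2k}_+$, is exactly that direct argument written out in full.
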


We note that the general HVO framework naturally includes two known specific set-valued optimization problems. 
When~$k=1$, the hyperboxes reduce to real intervals, and problem~\eqref{HVO} becomes the standard interval-valued optimization (IVO) problem. Given~$f_{\ell b}: \mathbb{R}^n \to \mathbb{R}$, $f_{u b}: \mathbb{R}^n \to \mathbb{R}$, and~$f_{\ell b}(x) \le f_{u b}(x)$, for all~$x \in X$, an~IVO problem can be formulated as follows
\begin{equation}\label{prob:ivo}
    \preceq_{\Rmbb_+} \text{-} \min_{x\in X} S(x) 
\quad\text{with}\quad S(x) = \{y \in \mathbb{R} ~|~ f_{\ell b}(x) \le y \le f_{u b}(x)\}.
\end{equation}
For IVO problems, Theorem~\ref{thm:vectorize_HVO} simplifies to the following corollary.

\begin{corollary}\label{corollary:vectorization}
Let~$K = \mathbb{R}_{+}$. Then, $\bar{x} \in X$ is a strictly minimal solution to problem~\eqref{prob:ivo} w.r.t.~$K$ if and only if~$\bar{x}$ is a strictly efficient solution of the following vector-valued optimization problem
\begin{equation}\label{prob:vvo_interval_3}
\le_{\mathbb{R}_{+}^2} \shortdash \min_{x \in X} ~ (f_{\ell b}(x), f_{u b}(x)).
\end{equation}
\end{corollary}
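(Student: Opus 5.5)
The plan is to obtain Corollary~\ref{corollary:vectorization} by specializing Theorem~\ref{thm:vectorize_HVO} to $k=1$, and then to note that a permutation of the objectives leaves efficiency unchanged. One small point needs care first. Theorem~\ref{thm:vectorize_HVO} is stated for $k\ge 2$, but its argument uses only Proposition~\ref{prop:hyper_compare}, and that comparison holds for $k=1$ without change. So I will either invoke the theorem with $k=1$ or repeat its short argument directly for intervals. To keep the proof self-contained, I will do the latter.

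\textbf{Step 1 (intervals are hyperboxes).} With $K=\mathbb{R}_+$, I will observe that
\[
S(x)=\{y\in\mathbb{R}: f_{\ell b}(x)\le y\le f_{ub}(x)\}=(\{f_{\ell b}(x)\}+\mathbb{R}_+)\cap(\{f_{ub}(x)\}-\mathbb{R}_+)=[f_{\ell b}(x),f_{ub}(x)].
\]
Here $f_{\ell b}(x)\le f_{ub}(x)$ by assumption. Proposition~\ref{prop:hyper_compare} with $k=1$ then gives, for any $x,x'\in X$,
\[
S(x')\preceq_K S(x)\iff f_{\ell b}(x')\le f_{\ell b}(x)\ \text{and}\ f_{ub}(x')\le f_{ub}(x)\iff F(x')\le_{\mathbb{R}^2_+}F(x),
\]
where $F=(f_{\ell b},f_{ub})$. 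The set-less relation is the conjunction of the lower and upper relations, and each of these reduces to one scalar inequality.

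\textbf{Step 2 (transfer strict minimality).} By \eqref{eq:s10}, $\bar x$ is strictly minimal if and only if, for every $x\in X$, $S(x)\preceq_K S(\bar x)$ implies $S(\bar x)\preceq_K S(x)$. By Step~1, this is equivalent to requiring that $F(x)\le_{\mathbb{R}^2_+}F(\bar x)$ imply $F(\bar x)\le_{\mathbb{R}^2_+}F(x)$, that is, $F(x)=F(\bar x)$. This is exactly condition \eqref{eq:v10} for $F(\bar x)$ in $F(X)$, namely strict efficiency of $\bar x$ for \eqref{prob:vvo_interval_3}. I will compare images $F(x)$ rather than points $x$, so distinct points with equal images cause no trouble.

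\textbf{Step 3 (ordering of objectives).} Theorem~\ref{thm:vectorize_HVO} lists the objectives as $(F_{ub},F_{lb})$, whereas \eqref{prob:vvo_interval_3} lists them as $(f_{\ell b},f_{ub})$. The order $\le_{\mathbb{R}^2_+}$ is invariant under swapping coordinates, so the two problems have the same strictly efficient solutions. The only real obstacle is the $k\ge2$ restriction noted at the start; Step~1 resolves it by checking directly that the hyperbox comparison holds for intervals.
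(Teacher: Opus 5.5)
Your proposal is correct and is essentially the paper's argument. The paper obtains the corollary simply by specializing Theorem~\ref{thm:vectorize_HVO} (whose proof rests on Proposition~\ref{prop:hyper_compare}) to $k=1$, and your Steps~1--2 spell out that specialization: you check the interval comparison directly and transfer \eqref{eq:s10} to \eqref{eq:v10}. You are also right that the HVO section nominally assumes $k\ge 2$, which is a small inconsistency in the paper, and your direct verification at $k=1$ resolves it.
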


When $k=2$, the hyperboxes are rectangles in~$\Rmbb^2$, so the HVO problem~\eqref{HVO} reduces to the rectangular-valued optimization (RVO) problem. Given~$F_{\ell b}: \mathbb{R}^n \to \mathbb{R}^2$, $F_{u b}: \mathbb{R}^n \to \mathbb{R}^2$, and~$F_{\ell b}(x) \le_K F_{u b}(x)$, for all~$x \in X$, an~RVO problem can be formulated as follows 
    \begin{equation}\label{prob:sqvo}
    \preceq_{\Rmbb^2_+} \text{-} \min_{x\in X} S(x) 
\quad\text{with}\quad S(x) = [F_{\ell b}(x), F_{ub}(x)].
    \end{equation} 
Similarly, for~RVO problems, the vectorization theorem~\ref{thm:vectorize_HVO} for the HVO simplifies to the following corollary.
\bcorollary \label{corollary:sqvo}
Let $K=\Rmbb_+^2$. Then $\Bar{x}\in X$ is a strictly minimal solution to problem~\eqref{prob:sqvo} w.r.t~$K$ if and only if $\Bar{x}$ is a strictly efficient solution of the following vector-valued optimization problem
\begin{equation} \label{R4}
    \leq_{\Rmbb^4_+} \text{-} \min_{x\in X} \bpmatrix F_{ub}(x)\\
    F_{\ell b}(x) \epmatrix.
\end{equation}
\ecorollary

\section{Stochastic hyperbox-valued optimization}\label{sec:stoch_bvo}

We will introduce new stochastic hyperbox-valued optimization formulations based on super and sub-quantiles. Such formulations will be used for~ML robustness in Section~\ref{sec:robust_learning_bvo}.    

\subsection{Superquantiles and subquantiles} \label{subsec:review}

Let us consider a probability space~$(\Omega, \mathcal{F}, \mathbb{P})$, with probability measure~$\mathbb{P}$ and~$\sigma$-algebra~$\mathcal{F}$ containing subsets of the sample space~$\Omega$. 
Let~$Z$ be a random variable defined in~$\Omega$.
Denoting~$F_Z: \mathbb{R} \to [0,1]$ as the cumulative distribution function of~$Z$ (i.e., $F_Z(z) = \mathbb{P}(Z \le z)$ for all~$z \in \mathcal{Z}$) and~$Q_Z: (0,1) \to \mathbb{R}$ as the quantile function of~$Z$ (i.e., $Q_Z(p) = \min\{z ~|~ F_Z(z) \ge p\}$ for all~$p \in (0,1)$), we can define the superquantile\footnote{In risk management and finance, the quantile and superquantile functions are also known as Value-at-Risk~(VaR) and Conditional Value-at-Risk~(CVaR), respectively~\cite{RTRockafellar_OJohannes_2013}.} of~$Z$ as a function~$\bar{\mathbb{S}}_p$ given by 
\begin{equation}\label{def:superquantile-gen}
\bar{\mathbb{S}}_p[Z] \; = \; \frac{1}{1-p} \int_{p}^{1} Q_Z(p')dp', \text{ for all } p \in (0,1) 
\end{equation}
and the subquantile of~$Z$ as a function~$\underline{\mathbb{S}}_p[Z]$ given by
\begin{equation}\label{def:subquantile-gen}
\underline{\mathbb{S}}_p[Z] \; = \; \frac{1}{p} \int_{0}^{p} Q_Z(p')dp', \text{ for all } p \in (0,1). 
\end{equation}

One can address the cases where~$p=0$ or~$p=1$ by extending the definition in~\eqref{def:superquantile-gen} as follows~(see~\cite{RTRockafellar_OJohannes_2013})
\begin{equation}\label{def:superquantile-ext}
\bar{\mathbb{S}}_0[Z] = \mathbb{E}[Z] \ \text{ and } \ \bar{\mathbb{S}}_1[Z] = \sup_{\omega \in \Omega} Z(\omega).
\end{equation}
Similar to~\eqref{def:superquantile-ext}, one can extend the definition in~\eqref{def:subquantile-gen} for~$p=0$ and~$p=1$ by setting
\begin{equation}\label{def:subquantile-ext}
\underline{\mathbb{S}}_0[Z] = \inf_{\omega \in \Omega} Z(\omega) \ \text{ and } \ \underline{\mathbb{S}}_1[Z] = \mathbb{E}[Z].
\end{equation}

When~$Z$ has a continuous probability distribution, we can give characterizations equivalent to~\eqref{def:superquantile-gen} for the superquantile and~\eqref{def:subquantile-gen} for the subquantile. In particular, the superquantile of~$Z$ reduces to
\begin{equation}\label{def:superquantile}
\bar{\mathbb{S}}_p[Z] \; = \; \mathbb{E}[Z ~|~ Z \ge Q_Z(p)], \text{ for all } p \in (0,1), 
\end{equation}
which shows that~$\bar{\mathbb{S}}_p[Z]$ is the mean of the upper~$p$-tail distribution of~$Z$.
Analogously, the subquantile of~$Z$ reduces to
\begin{equation}\label{def:subquantile}
\underline{\mathbb{S}}_p[Z] \; = \; \mathbb{E}[Z ~|~ Z \le Q_Z(p)], \text{ for all } p \in (0,1), 
\end{equation}
which shows that~$\underline{\mathbb{S}}_p[Z]$ is the mean of the lower $p$-tail distribution of~$Z$.
 
An equivalent characterization of the superquantile function~\eqref{def:superquantile-gen} that applies when~$Z$ has a general probability distribution is (see~\cite[Theorem 1]{RTRockafellar_SUryasev_2000})
\begin{equation}\label{def:superquantile2}
\bar{\mathbb{S}}_p[Z] \; = \; \min_{\eta \in \mathbb{R}} \big\{ \eta + \frac{1}{1-p} \mathbb{E}[\max(Z - \eta, 0)] \big\}.
\end{equation}
The objective function of the optimization problem in~\eqref{def:superquantile2} is non-differentiable and convex in~$\eta$~\cite[Section 3.2]{SSarykalin_etal_2008}. The optimal solution in~$\eta$ is equal to~$Q_Z(p)$. Moreover,$~\bar{\mathbb{S}}_p[Z]$ is convex in~$Z$.

One can derive a similar characterization for the subquantile function~$\underline{\mathbb{S}}_p[Z]$ (see~Appendix~\ref{app:sub}) as follows:
\begin{equation}\label{def:uryasev}
\underline{\mathbb{S}}_p[Z] \; = \; \max_{\eta \in \mathbb{R}} \big\{ \eta - \frac{1}{p} \mathbb{E}[\max(\eta - Z, 0)] \big\}.
\end{equation}
Similarly, the optimal solution in~$\eta$ is equal to~$Q_Z(p)$. However,$~\underline{\mathbb{S}}_p[Z]$ is now a concave function in~$\eta$ and~$Z$.

\subsection{Stochastic interval-valued optimization}\label{subsec:stoch_ivo}

Let us consider a continuously differentiable function~$\psi: \mathbb{R}^n \times \mathbb{R}^d \to \mathbb{R}$ and two scalars~$\underline{p} \in [0,1]$ and~$\bar{p} \in [0,1]$ (we include the boundaries in such intervals to account for the extended definitions provided in~\eqref{def:superquantile-ext} and~\eqref{def:subquantile-ext}). In this paper, we focus on stochastic~IVO problems that can be written as follows
\begin{equation}\label{prob:stochastic_ivo}
\text{ problem~\eqref{prob:ivo} with } f_{\ell b}(x) = \underline{\mathbb{S}}_{\underline{p}}[\psi(x,\xi)] \text{ and } f_{u b}(x) = \bar{\mathbb{S}}_{\bar{p}}[\psi(x,\xi)],
\end{equation}
where~$\xi \in \mathbb{R}^d$ is a random variable having a probability distribution independent of~$x$.

As a consequence of Corollary~\ref{corollary:vectorization} in Section~\ref{sec: hyperbox}, the set of minimal solutions to the~IVO problem~\eqref{prob:ivo} can be determined by solving the corresponding multi-objective problem~\eqref{prob:vvo_interval_3}, where $f_{\ell b}(x)$ and $f_{u b}(x)$ are given by~\eqref{prob:stochastic_ivo}.
One can consider different cases based on the values of~$\underline{p}$ and~$\bar{p}$. In Sections~\ref{subsubsec:case_not1},~\ref{subsubsec:case_not01}, and~\ref{subsubsec:case1}, we focus on the most significant ones, from which all the remaining cases can be trivially obtained. 

\subsubsection{Case~$\underline{p} = 1$ and~$\bar{p} = 0$}\label{subsubsec:case_not1}
Recalling the extended definitions~\eqref{def:superquantile-ext} and~\eqref{def:subquantile-ext}, the case~$(\underline{p} = 1, \bar{p} = 0)$ results in a bi-objective optimization problem where both objective functions are given by equal expectations, i.e., $f_{\ell b}(x) = f_{ub}(x) = \mathbb{E}[\psi(x,\xi)]$. Such a problem can solved by applying the classical stochastic gradient method~\cite{LBottou_FECurtis_JNocedal_2018}.

\subsubsection{Case~$\underline{p} = 0$ and~$\bar{p} = 1$}\label{subsubsec:case_not01}
When~$\psi(x,\xi)$ is a continuous function of~$x$ and~$\xi$, the case~$(\underline{p} = 0, \bar{p} = 1)$ leads to~IVO problems for which the equivalent bi-objective problem~\eqref{prob:vvo_interval_3} can be written as
\begin{equation}\label{prob:determinisic_ivo}
\le_{\mathbb{R}_{+}^2} \shortdash \min_{x \in X} ~ (f_{\ell b}(x) = \min_{\xi \in P}\psi(x,\xi),~ f_{u b}(x) = \max_{\xi \in P}  \psi(x,\xi)),
\end{equation}
where~$P \subseteq \mathbb{R}^d$ represents the set of values taken on by~$\xi$, assumed compact. Note that problem~\eqref{prob:determinisic_ivo} is deterministic. Danskin's Theorem~\cite{JMDanskin_1967} provides a way to compute the subdifferentials of~$f_{ub}$ and~$f_{\ell b}$. In particular, if~$\psi(x,\xi)$ is differentiable in~$x$ for all~$\xi \in P$ and~$\nabla_x \psi(x,\xi)$ is a continuous function of~$\xi$ for all~$x \in X$, then the subdifferential of~$f_{ub}$ is
\[
\partial f_{ub}(x) = \conv\{\nabla_x \psi(x,\xi) \; | \; \xi \in P_{ub}(x)\}, \text{ where } P_{ub}(x) = \{\bar{\xi} \in P \; | \; \psi(x,\bar{\xi}) = \max_{\xi \in P} \; \psi(x,\xi) \}.
\]
The subdifferential of~$f_{\ell b}$ and the corresponding set~$P_{\ell b}$ can be obtained from similar steps. 
To solve problem~\eqref{prob:determinisic_ivo}, one can apply a multi-subgradient method (and by that we mean the multi-gradient method using subgradients instead of gradients~\cite{SLiu_LNVicente_2021}) or a multi-gradient sampling method (and by that we mean the extension of gradient sampling~\cite{JVBurke_ASLewis_MLOverton_2005} to MOO).

\subsubsection{Case~$\underline{p} \in (0,1)$ and~$\bar{p} \in (0,1)$}\label{subsubsec:case1}
We will first focus on~$f_{ub}$. 
Assume that one has access to a set of~$N$ i.i.d. samples of~$\xi$, denoted as~$\{\xi_1, \ldots, \xi_N\}$. 
One can now approximate the superquantile~$\bar{\mathbb{S}}_{\bar{p}}[\psi(x,\xi)]$ in~$f_{ub}(x)$ using data samples, resulting in
\begin{equation}\label{eq:superquantile2_approx}
f_{ub}^N(x) \; = \; \bar{\mathbb{S}}^N_{\bar{p}}[\psi(x,\xi)] = \min_{\eta \in \mathbb{R}} \big\{ \eta + \frac{1}{N(1-\bar{p})} \sum_{i=1}^{N}[\max(\psi(x,\xi_i) - \eta, 0)] \big\},
\end{equation}
which is obtained from the empirical version of~\eqref{def:superquantile2} by setting~$Z = \psi(x,\xi)$ and~$p=\bar{p}$. 
Similarly, for~$f_{\ell b}$, the empirical approximation of the subquantile can be written as
\begin{equation}\label{eq:subquantile_approx}
  f_{\ell b}^N(x)
  \;=\;
  \underline{\mathbb{S}}_{p}^N\!\big[\psi(x,\xi)\big]
  \;=\;
  \max_{\eta\in\Rmbb}\Big\{
    \eta \;-\; \frac{1}{Np}\sum_{i=1}^N [\max(\,\eta-\psi(x,\xi_i),0)]
  \Big\}.
\end{equation}


\subsection{Stochastic rectangle-valued optimization}\label{subsec:stoch_rvo}
In this section, let us consider two continuously differentiable functions~$\psi_1$ and~$\psi_2$ (both from~$\mathbb{R}^n \times \mathbb{R}^d$ to~$\mathbb{R}$) with two scalars~$\underline{p}$ and $\bar{p}$. In this paper, we are focusing on the following stochastic~RVO problem
\begin{equation}\label{prob:stochastic_rvoo}
\text{ problem~\eqref{prob:sqvo} with } F_{\ell b}(x) = (\underline{\mathbb{S}}_{\underline{p}}[\psi_1(x,\xi)], \bar{\mathbb{S}}_{\bar{p}}[\psi_1(x,\xi)]) \text{ and } F_{u b}(x) = (\underline{\mathbb{S}}_{\underline{p}}[\psi_2(x,\xi)], \bar{\mathbb{S}}_{\bar{p}}[\psi_2(x,\xi)]), 
\end{equation}
where~$\xi\in\Rmbb^d$ is a random variable having a probability distribution independent of~$x$.
Following Corollary~\ref{corollary:sqvo} in Section~\ref{sec: hyperbox}, determining the minimal solutions of~\eqref{prob:stochastic_rvoo} reduces to solving the multi-objective problem~\eqref{R4}, with the four objectives specified by~\eqref{prob:stochastic_rvoo}. For the stochastic RVO problem, one can also consider different cases based on the values of~$\underline{p}$ and~$\bar{p}$, which we have discussed in Section~\ref{subsec:stoch_ivo}.

\section{Robust learning through stochastic hyperbox-valued optimization}\label{sec:robust_learning_bvo}

To apply set-valued optimization in machine learning~(ML), one can reformulate an~ML problem as an~SVO problem where a set-valued empirical risk function is minimized instead of a traditional finite-sum function. The set-valued empirical risk function, denoted as~$L(x) \subseteq \mathbb{R}$, associates a set of loss function values (computed over all the points in the training set) with each~$x \in \mathbb{R}^n$, where~$x$ represents the vector of parameters of an~ML model (e.g., weights of a neural network). A solution obtained through set-valued~ML can result in lower errors on testing data compared to traditional~ML because the set-valued empirical risk function considers both the expected value and variance of the loss function across all training data points, while traditional empirical risk focuses solely on the expected value. 

The setting we consider in this paper is binary supervised classification. Let us denote a features/labels dataset used for training by~$\mathcal{D}_{train} = \{ (u_i, v_i), \, i \in \{ 1, \ldots, N\} \}$, which consists of~$N$~pairs of a real feature vector~$u_i \in \mathbb{R}^d$ and a target label~$v_i \in \{-1,+1\}$. We assume that for any data point~$i \in \{1, \ldots, N\}$, the classification is deemed correct if the right label is predicted. To evaluate the loss incurred when using the prediction function~$\phi(u; x)$, where~$x \in \mathbb{R}^n$ is a vector of parameters, we use a loss function~$\ell(\phi(u; x), v)$. 

In traditional~ML, we train the model by minimizing the single-valued empirical risk function~$\mathcal{L}(x) = (1/N) \sum_{i=1}^N \ell(\phi (u_i; x), v_i)$, which computes the average loss over the samples in~$\mathcal{D}_{train}$.
A different perspective consists of minimizing a set-valued empirical risk mapping~$L(x) = \{\ell(\phi (u_i; x), v_i)~|~ i \in \{1, \ldots, N\}\}$, which computes
a set of loss values over all training points. Note that~$\mathcal{L}:\mathbb{R}^n \to \mathbb{R}$ and~$L:\mathbb{R}^n \rightrightarrows \mathbb{R}$. We will develop stochastic~IVO and RVO formulations based on super and sub-quantiles, which provide flexibility in capturing likely scenarios while avoiding extreme ones that may not reflect practical outcomes.

\subsection{Robust learning through stochastic interval-valued optimization}
\par\noindent
\begin{wrapfigure}[]{R}{0.40\textwidth}
    \centering
    \begin{tikzpicture}[scale=0.7]
    \draw[->] (-0.5, 0) -- (5, 0) node[above] {\footnotesize $\xi$};
    \draw[->] (0, -0.5) -- (0, 4); 
    \node at (2.5, 3.5) {\color{blue}\footnotesize $\psi_\xi = \psi(x,\xi)$};
    

    \def\xbar{2.5}
    
    \draw[thick, domain=0.82:\xbar, smooth, variable=\x, blue] 
        plot ({\x}, {2*(\x-2)^2+0.2});
    
    \def\slope{4*(\xbar-2)}
    
    \def\fvalue{2*(\xbar-2)^2+0.2}
    
    \def\a{-1}
    \def\b{\slope}
    \def\c{\fvalue}
    
    \draw[thick, domain=\xbar:3.5, smooth, variable=\x, blue] 
        plot ({\x}, {\a*(\x-\xbar)^2 + \b*(\x-\xbar) + \c});

    \draw[thick, domain=3.5:4.62, smooth, variable=\x, blue] 
        plot ({\x}, {(\x-3.5)^2 + 1.7});

    \draw[dashed, red] (0, 3.0) -- (4.62, 3.0);
    \node at (-0.2, 3.3) [left, red] {\footnotesize $\max_{\xi} \psi_\xi$};

    \draw[dashed, red] (0, 0.2) -- (2, 0.2);
    \node at (-0.2, 0.3) [left, red] {\footnotesize $\min_{\xi} \psi_\xi$};

    \draw[ultra thick, black] (0.82, 0) -- (4.62, 0);
    \node at (2.72, -0.3) {\footnotesize $\Xi$};

    \draw[ultra thick, black] (0, 0.6) -- (0, 2.62);
    
    \node[right] at (0, 0.6) {\color{darkgreen} \footnotesize $\underline{\mathbb{S}}_{\underline{p}}[\psi_\xi]$};
    
    \node[right] at (0, 2.62) {\color{darkgreen} \footnotesize $\bar{\mathbb{S}}_{\bar{p}}[\psi_\xi]$};
    
    \node at (-0.7, 1.6) {\footnotesize $S(x)$};

    \draw[thick, darkgreen] (-0.1, 0.6) -- (0.1, 0.6);
    
    \draw[thick, darkgreen] (-0.1, 2.62) -- (0.1, 2.62);

\end{tikzpicture}
    \caption{$S(x)$ for problem~\eqref{prob:stochastic_ivo}.}
    \label{fig:example_ivo}
\end{wrapfigure}
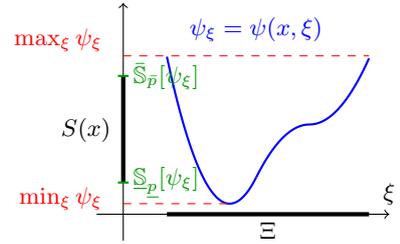

\noindent Let us consider a continuously differentiable function~$\psi: \mathbb{R}^n \times \mathbb{R}^d \to \mathbb{R}$ and two scalars~$\underline{p} \in [0,1]$ and~$\bar{p} \in [0,1]$ (we include the boundaries in such intervals to account for the extended definitions of superquantiles and subquantiles provided in~\eqref{def:superquantile-ext}). 
Let~$\xi$ 
be a random vector taking values in~$\Xi \subseteq \mathbb{R}^d$ with a probability distribution independent of~$x$. We propose reformulating any~ML problem involving a single objective (e.g., minimizing misclassification error) by using the stochastic~IVO problem~\eqref{prob:stochastic_ivo},
where~$\psi(x,\xi)$ plays the role of the loss function~$\ell(\phi(u;x),v)$, with~$\xi = (u,v)$. Figure~\ref{fig:example_ivo} provides an illustration for~$d = 1$.
As part of this section, we will use the bi-objective optimization problem~\eqref{prob:vvo_interval_3} to study the robustness of the optimal solution to this stochastic~IVO problem in popular~ML applications. Remark~\ref{remark} motivates the proposed robust IVO approach by providing an example that interprets the geometric meaning of Corollary~\ref{corollary:vectorization} and contrasts the resulting solution set with both ERM and the classical min--max robust solution.

\newpage
\par\noindent
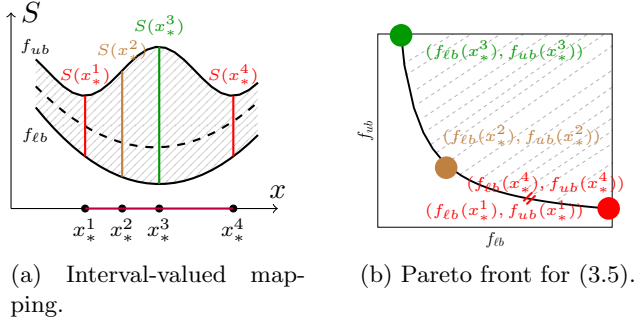
\begin{wrapfigure}{l}{0.60\textwidth}
  \centering
  \vskip 0cm
    \subfloat[Interval-valued mapping.]{\begin{tikzpicture}[scale=0.65]

\draw[->] (0,0) -- (5.4,0) node[above] {$x$};
\draw[->] (0,0) -- (0,4) node[right] {$S$};

\draw[thick] plot[domain=0.5:5] (\x,{2.8 + sin(deg(2.1*\x-4.7))*0.5}) node[right] {};
\draw[thick] plot[domain=0.5:5] (\x,{(\x-3)*(\x-3)/4 + 0.5}) node[right] {};

\draw[thick, dashed] plot[domain=0.5:5] (\x,{(\x-3)*(\x-3)/7 + 1.6 + 0.07*((\x-0.5)*(\x-5))}) node[right] {};

\fill[pattern=north east lines, pattern color=black, opacity=0.4]
  (0.5,{(0.5-3)*(0.5-3)/4 + 0.5}) -- 
  plot[domain=0.5:5] (\x,{(\x-3)*(\x-3)/4 + 0.5}) -- 
  (5,{2.8 + sin(deg(2.1*5-4.7))*0.5}) -- 
  plot[domain=5:0.5] (\x,{2.8 + sin(deg(2.1*\x-4.7))*0.5}) -- cycle;

\draw[solid, thick,  red] (1.5,1.0625) -- (1.5,2.3) node[above, black] {\color{red}\tiny $S(x_*^1)$};
\draw[solid, thick,  brown] (2.25,0.65) -- (2.25,2.8) node[above, black] {\color{brown}\tiny $S(x_*^2)$};
\draw[solid, thick,  darkgreen] (3,0.5) -- (3,3.3) node[above, black] {\color{darkgreen}\tiny $S(x_*^3)$};
\draw[solid, thick,  red] (4.5,1.0625) -- (4.5,2.3) node[above, black] {\color{red}\tiny $S(x_*^4)$};

\filldraw[black] (1.5,0) circle (2pt) node[below] {\scriptsize $x_*^1$};
\filldraw[black] (2.25,0) circle (2pt) node[below] {\scriptsize $x_*^2$};
\filldraw[black] (3,0) circle (2pt) node[below] {\scriptsize $x_*^3$};
\filldraw[black] (4.5,0) circle (2pt) node[below] {\scriptsize $x_*^4$};

\draw[thick, purple] (1.5, 0) -- (4.5, 0);

\node at (0.5, 3.3) {\tiny $f_{ub}$};
\node at (0.5, 1.5) {\tiny $f_{\ell b}$};

\end{tikzpicture}}
    \hspace{0.5cm}
    \subfloat[Pareto front for~\eqref{prob:vvo_interval_3}.]{\begin{tikzpicture}[scale=0.45]
\centering
\begin{axis}
[xmin=0, xmax=3,
ymin=-0.5, ymax=3,
xlabel=$f_{\ell b}$,
ylabel = $f_{ub}$,
ylabel near ticks,
xlabel near ticks,
xlabel style={font=\Large},
ylabel style={font=\Large},
axis on top=true,
xtick=\empty,
ytick=\empty,
domain=0.1:3,]
\addplot [mark=none,draw=black,ultra thick, solid] {1/(x) - 0.5}; 
\addplot[style={solid}, draw=black, dashed, ultra thin, opacity=0.3] coordinates {(2.3, 0) (8, 5.4)};
\addplot[style={solid}, draw=black, dashed, ultra thin, opacity=0.3] coordinates {(2, 0.1) (8, 5.6)};
\addplot[style={solid}, draw=black, dashed, ultra thin, opacity=0.3] coordinates {(1.85, 0.15) (8, 6)};
\addplot[style={solid}, draw=black, dashed, ultra thin, opacity=0.3] coordinates {(1.65, 0.2) (8, 6.2)};
\addplot[style={solid}, draw=black, dashed, ultra thin, opacity=0.3] coordinates {(1.48, 0.25) (8, 6.6)};
\addplot[style={solid}, draw=black, dashed, ultra thin, opacity=0.3] coordinates {(1.3, 0.35) (8, 7)};
\addplot[style={solid}, draw=black, dashed, ultra thin, opacity=0.3] coordinates {(1.05, 0.45) (8, 7.2)};
\addplot[style={solid}, draw=black, dashed, ultra thin, opacity=0.3] coordinates {(0.87, 0.65) (8, 7.8)};
\addplot[style={solid}, draw=black, dashed, ultra thin, opacity=0.3] coordinates {(0.75, 0.85) (8, 8.1)};
\addplot[style={solid}, draw=black, dashed, ultra thin, opacity=0.3] coordinates {(0.68, 1) (8, 8.3)};
\addplot[style={solid}, draw=black, dashed, ultra thin, opacity=0.3] coordinates {(0.65, 1.27) (8, 8.27)};
\addplot[style={solid}, draw=black, dashed, ultra thin, opacity=0.3] coordinates {(0.6, 1.5) (8, 8.5)};
\addplot[style={solid}, draw=black, dashed, ultra thin, opacity=0.3] coordinates {(0.53, 1.75) (8, 8.75)};
\addplot[style={solid}, draw=black, dashed, ultra thin, opacity=0.3] coordinates {(0.5, 2.0) (8, 9)};
\addplot[style={solid}, draw=black, dashed, ultra thin, opacity=0.3] coordinates {(0.35, 2.25) (8, 9)};
\addplot[style={solid}, draw=black, dashed, ultra thin, opacity=0.3] coordinates {(0.32, 2.45) (8, 9)};


\end{axis}

\foreach \Point in {(0.68, 5.6)}{
    \node at \Point {\textcolor{darkgreen}{\scalebox{2}{\textbullet}}};
}

\foreach \Point in {(2,1.7)}{
    \node at \Point {\textcolor{brown}{\scalebox{2}{\textbullet}}};
}

\foreach \Point in {(6.75, 0.50)}{
    \node at \Point {\textcolor{red}{\scalebox{2}{\textbullet}}};
}

\node[text width=1cm] at (2.5, 5.2) {\color{darkgreen} \tiny $(f_{\ell b}(x_*^3),f_{ub}(x_*^3))$};
\node[text width=1cm] at (3, 2.7) {\color{brown} \tiny $(f_{\ell b}(x_*^2),f_{ub}(x_*^2))$};
\node[text width=1cm] at (3.7, 1.3) {\color{red} \tiny $(f_{\ell b}(x_*^4),f_{ub}(x_*^4))$};
\node[rotate=45] at (4.5, 0.8) {\color{red} \texttt{=}};
\node[text width=1cm] at (2.5, 0.5) {\color{red} \tiny $(f_{\ell b}(x_*^1),f_{ub}(x_*^1))$};

\node[text width=1cm] at (4, 4) { };

\end{tikzpicture}}
  \caption{Vectorization theorem for~IVO.}\label{fig:example_ivo_vectorization}
\end{wrapfigure}

\begin{remark}\label{remark}    Figure \ref{fig:example_ivo_vectorization} 
    illustrates Corollary \ref{corollary:vectorization} and problem \eqref{prob:vvo_interval_3} when $f_{\ell b}(x) = \underline{\mathbb{S}}_{\underline{p}}[\psi(x,\xi)]$ and $f_{u b}(x) = \bar{\mathbb{S}}_{\bar{p}}[\psi(x,\xi)]$, where $\psi(x,\xi)$ represents the training loss function. In Figure \ref{fig:example_ivo_vectorization}(a), all points between~$x_*^1$ and~$x_*^4$ are minimal solutions of~$S$ w.r.t.~$\mathbb{R}_+$. Such points correspond to the strict Pareto optimal solutions of problem~\eqref{prob:vvo_interval_3}, as shown in Figure~\ref{fig:example_ivo_vectorization}(b). Note that~$x_*^3$ is the minimizer of the dashed line in Figure~\ref{fig:example_ivo_vectorization}(a), which one can interpret as the single-valued expected training loss function~$\mathbb{E}[\psi(x,\xi)]$. Such a minimizer results in training loss values that are either very low (risk of overfitting) or very high (risk of underfitting). Additionally, $x_*^1$ and~$x_*^4$ are local minimizers of~$f_{ub}$ and represent the solutions from the traditional robust optimization approach (min-max formulation), which focuses on the worst-case scenario. In contrast, points like~$x_*^2$ offer a better compromise and are expected to perform more effectively on validation data.
\end{remark}

\subsection{Robust learning through stochastic rectangle-valued optimization} \label{sec: rl through rvo}

We claim that any~ML application problem involving two objectives (e.g., minimizing both misclassification error and social bias) can be formulated as a stochastic~RVO problem~\eqref{prob:stochastic_rvoo}. 
In addition to the accuracy loss function~$\ell$, one can consider a second objective, such as a fairness loss function~$\ell_F$, resulting in an additional set-valued empirical risk function~$L_F(x) ~=~ \{\ell_F(\phi (u_i; x), v_i) ~|~ i \in \{1, \ldots, N\}\}$. 
Note that here we focus on robustness to biased datasets, but other loss functions can be used to address different forms of robustness or fairness.
Given two continuously differentiable functions~$\psi_1$ and~$\psi_2$ (both from~$\mathbb{R}^n \times \mathbb{R}^d$ to~$\mathbb{R}$),~$\underline{p}$, $\bar{p}$, and~$\xi$, we propose reformulating a fair~ML problem by using 
the four-objective optimization problem~\eqref{R4} with $F_{\ell b}(x) = (\underline{\mathbb{S}}_{\underline{p}}[\psi_1(x,\xi)], \bar{\mathbb{S}}_{\bar{p}}[\psi_1(x,\xi)]) \text{ and } F_{u b}(x) = (\underline{\mathbb{S}}_{\underline{p}}[\psi_2(x,\xi)], \bar{\mathbb{S}}_{\bar{p}}[\psi_2(x,\xi)])$ ,
where~$\psi_1$ and~$\psi_2$ represent the accuracy and fairness loss functions.

\subsection{Numerical results}\label{section: numerical}
\subsubsection{Solving the optimization/learning problems}\label{sec:sol}
First, as a baseline for comparison, we consider the empirical risk minimization (ERM) model obtained by minimizing the empirical risk $\mathcal{L}(x)$. We denote an ERM solution by $x_{\mathrm{ERM}}$, defined as
\begin{equation*}
    x_{\mathrm{ERM}} \;\in\; \arg\min_{x}\, \mathcal{L}(x)
    \;=\;
    \arg\min_{x}\; \frac{1}{N}\sum_{i=1}^{N}\ell\bigl(\phi(u_i;x),\,v_i\bigr).
\end{equation*}
We compute $x_{\mathrm{ERM}}$ using the standard stochastic gradient (SG) method.

Next, we need to select~$x_{\mathrm{IVO}}$ as a certain Pareto optimal solution of the proposed bi-objective problem~\eqref{prob:vvo_interval_3} for~$f_{\ell b}(x)=\underline{\mathbb{S}}_{\underline{p}}[\psi(x,\xi)]$ and~$f_{u b}(x)=\bar{\mathbb{S}}_{\bar{p}}[\psi(x,\xi)]$, where~$\psi(x,\xi)$ is the loss function~$\ell(\phi(u;x),v)$ with~$\xi = (u,v)$. 
However, the set of Pareto optimal solutions typically contains many candidate solutions, and it is therefore neither practical nor principled to choose an arbitrary point in each test instance. Instead, an automatic selection rule is needed to identify a representative solution in a consistent and reproducible manner, and Pareto knee solutions offer an interesting trade-off as they are verbally characterized as Pareto optimal solutions corresponding to nondominated points on the Pareto front where a small improvement in any objective causes a large deterioration on at least one other objective~\cite{KDeb_SGupta_2010,JBranke_etal_2004,giovannelli2025pareto}.
We select~$x_{\mathrm{IVO}}$ as the knee solution using an angle-based criterion in the objective space. Specifically, after ordering the candidate nondominated points on the approximated front, we choose the knee solution as the Pareto optimal solution corresponding to the point with the smallest interior angle formed by its two adjacent neighbors, as this identifies the sharpest local bend of the Pareto front.

We also denote by~$x_{\mathrm{RVO}}$ an approximate selected Pareto knee solution of the proposed four-objective problem \eqref{R4} with $F_{\ell b}(x) = (\underline{\mathbb{S}}_{\underline{p}}[\psi_1(x,\xi)], \bar{\mathbb{S}}_{\bar{p}}[\psi_1(x,\xi)])$ and $F_{u b}(x) = (\underline{\mathbb{S}}_{\underline{p}}[\psi_2(x,\xi)], \bar{\mathbb{S}}_{\bar{p}}[\psi_2$ $(x,\xi)])$,
where~$\psi_1$ and~$\psi_2$ represent the accuracy and fairness loss functions. To compute~$x_{\mathrm{RVO}}$, we first orthogonally project all 4-dimensional points of the computed Pareto front onto the bi-dimensional space of accuracy and into the bi-dimensional space of fairness (resulting into 2 subsets of bi-dimensional approximated Pareto fronts). Then, for each of them, we calculate a knee point using the procedure described for~$x_{\mathrm{IVO}}$, resulting in 2~knees. For each knee, we then calculate a pair of weights (summing to one), by relating them to the corresponding extreme points. Finally, we sum the two knees in a weighted fashion (normalizing the weights to~1) to obtain $x_{\mathrm{RVO}}$.

To compute the Pareto fronts needed for~$x_{\mathrm{IVO}}$/$x_{\mathrm{RVO}}$, we use the Pareto-front stochastic multi-gradient (PF-SMG) method (see Appendix D). 
In doing so, to handle the inner non-smooth hinge term $\max(\cdot,0)$ in the empirical sub/superquantile-based objectives (4.11) and (4.12), we apply the smoothing stochastic gradient (SSG) method (see Appendix C). Specifically, each objective of the bi-objective and four-objective MOO reformulations derived from (3.5) and (3.7) is smoothed, allowing the computation of stochastic gradients.
This smoothing leads to stable gradient estimates in practice.
For the outer minimization over the auxiliary variable $\eta$ in~\eqref{def:superquantile2} and~\eqref{def:uryasev}, in the finite-sum setting with samples $z_1,\dots,z_n$, we approximate the quantile $Q_z(p)$ by sorting $\{z_i\}_{i=1}^n$ and selecting the empirical $p$-quantile.
For comparison, we also report results obtained with the single-solution stochastic multi-gradient (SMG) method (see Appendix~B), which returns a single Pareto point rather than an entire front.

\subsubsection{Robust evaluation}
To compare ERM and stochastic~IVO/RVO in terms of robustness, we evaluate each model on multiple independent test sets. Specifically, we consider $N_{\mathrm{rep}}$ test replications. For each replication $r\in\{1,\dots,N_{\mathrm{rep}}\}$, we draw an independent test set
\[
    \Dcal_{\mathrm{test}}^{(r)}
    \;=\;
    \bigl\{(u_i^{(r)},v_i^{(r)})\bigr\}_{i=1}^{N_{\mathrm{test}}}
\]
from a possibly shifted (or class-imbalanced) test distribution. More specifically, starting from the original, unmodified test set, we construct class-imbalanced test sets in each replication by adjusting the class proportions via sub-sampling from either the positive or the negative class. This yields test sets in which the fraction of positive instances is either smaller or larger than that in $\Dcal_{\mathrm{train}}$, thereby inducing a controlled distributional shift. Such shifts are common in practice: models are often trained on historical data from a particular region or time period, but deployed on populations whose demographic or economic characteristics differ across regions or evolve over time. Under this setting, our approach is expected to deliver more robust performance in the presence of significant distributional changes.

Let us now introduce the metrics used for the robust evaluation.
Let the empirical test risk for the $r$-th test replication be defined as
\begin{equation*}
\mathrm{Loss}^{(r)}(x)
=
\frac{1}{N_{\mathrm{test}}}
\sum_{i=1}^{N_{\mathrm{test}}}
\mathrm{loss}\bigl(\phi(u_{i}^{(r)};x),\, v_{i}^{(r)}\bigr),
\end{equation*}
where $\mathrm{loss}(\cdot)$ denotes either the standard empirical loss
function $\ell(\cdot)$ used in the ERM model, the weighted loss
$\lambda \underline{\mathbb{S}}_{\underline{p}}[\ell(\cdot)] + (1 - \lambda) \overline{\mathbb{S}}_{\bar{p}}[\ell(\cdot)]$ used in the IVO formulation (where the sub- and super-quantiles are represented as $f_{lb}(x)$ and $f_{ub}(x)$, respectively),
or the fairness loss $\ell_F(\cdot)$ used in the RVO formulation.
The weight $\lambda$ is chosen according to the knee point of the Pareto
front obtained from the IVO model.

Since the testing procedure is repeated for $N_{\mathrm{rep}}$ replications,
the robustness of the candidate models can be assessed by applying
standard statistical measures to the operator $\mathrm{Loss}^{(r)}(\cdot)$
across replications, such as the sample mean and sample variance.
We denote the mean and variance of $\mathrm{Loss}(\cdot)$ as
\[
M(x) =
\frac{1}{N_{\mathrm{rep}}}
\sum_{i=1}^{N_{\mathrm{rep}}}
\mathrm{Loss}^{(i)}(x),
\qquad
V(x) =
\frac{1}{N_{\mathrm{rep}}-1}
\sum_{i=1}^{N_{\mathrm{rep}}}
\bigl(\mathrm{Loss}^{(i)}(x)-M(x)\bigr)^2 .
\]



\subsubsection{Adult dataset results}
We test our methodology on the Adult dataset, a standard benchmark for binary classification. Each sample $u \in \mathbb{R}^d$ represents
demographic and socio-economic attributes of an individual (e.g., age,
education level, occupation, hours per week), and the binary label
$v \in \{0,1\}$ indicates whether the individual’s annual income exceeds
\$50{,}000. After standard preprocessing (one-hot encoding of categorical
variables, normalization of continuous features, and removal of examples
with missing values), we randomly split the data into a training set
$\Dcal_{\mathrm{train}}$ and an initial test set~$\Dcal_{\mathrm{test}}$. And we choose logistic regression as our prediction model. For this test and all other tests in the following sections, we use the same experimental settings. In particular, to ensure a fair comparison, all training procedures are performed with the same computational budget of 1500 iterations, the same initial stepsize of 1.3 (with the same discount rate). For the test phase, we use $N_{\mathrm{rep}}=30$ replications of the test procedure.

\begin{figure}[!htbp]
    \centering
    \subfloat[PF-SMG approximation of the non-dominated set and the selected knee point.]{
        \includegraphics[width=0.4\textwidth]{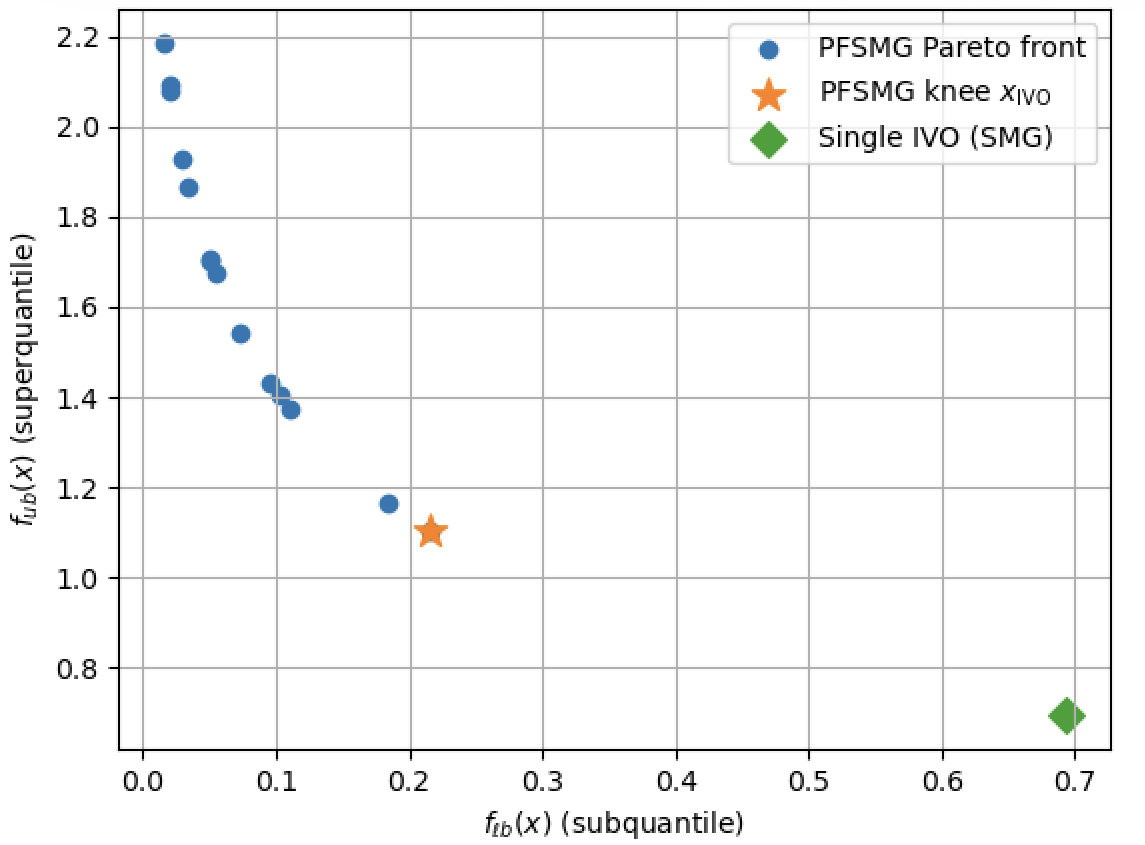}
        \label{fig:adult_pareto}
    }
    \hfill
    \subfloat[Replication variability under different positive-class fractions.]{
        \includegraphics[width=0.48\textwidth]{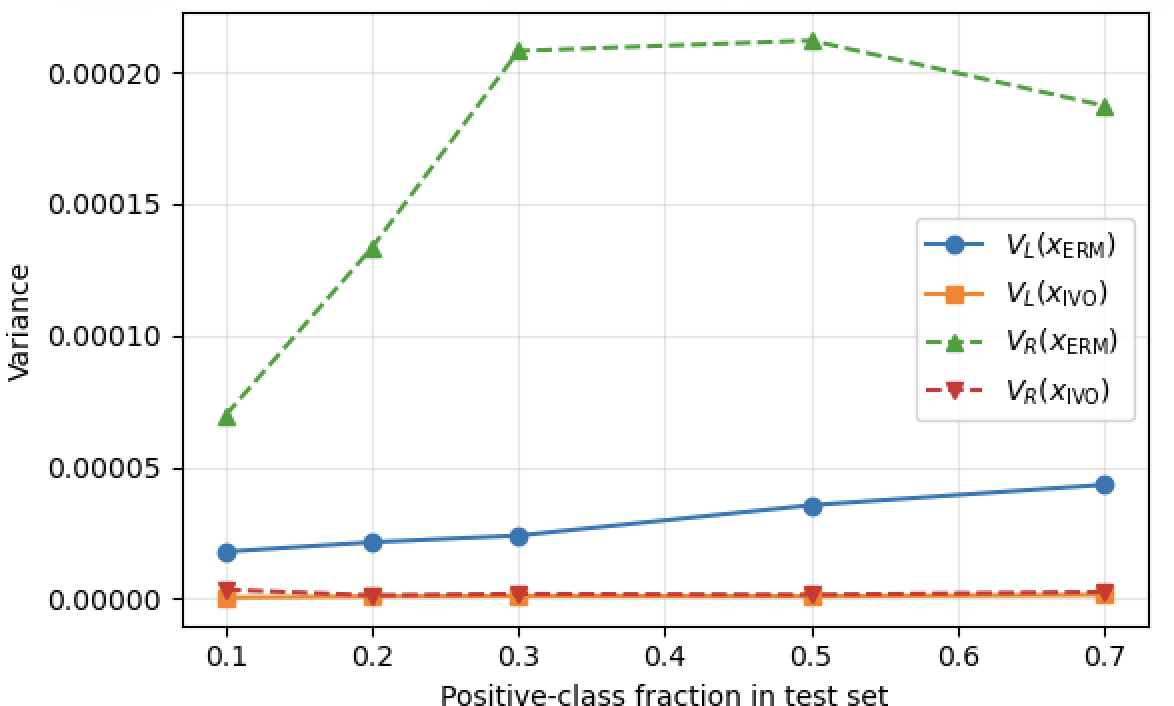}
        \label{fig:adult_variance}
    }
    \caption{Adult dataset results for IVO.}
    \label{fig:adult_results}
\end{figure}

Figure~\ref{fig:adult_pareto} reports the PF-SMG output in the
$\bigl(f_{\ell b}(x),f_{u b}(x)\bigr)$ plane.
The non-dominated set forms a Pareto front, and the chosen knee point
$x_{\mathrm{IVO}}$ (orange star) provides a balanced compromise between the lower- and upper-tail criteria.
For reference, the single-objective IVO solution produced by SMG (green diamond) achieves a much smaller
$f_{u b}(x)$ but with a markedly larger $f_{\ell b}(x)$, illustrating that extreme behavior may arise from a fixed scalarization.
Let us denote by $V_L(x)=V(x)$ the variance evaluated at $x=x_{\mathrm{ERM}}$ with $\mathrm{loss}(\cdot)=\ell(\cdot)$,
and by $V_R(x)=V(x)$ the variance evaluated at $x=x_{\mathrm{IVO}}$
with $\mathrm{loss}(\cdot)=\lambda \underline{\mathbb{S}}_{\underline{p}}[\ell(\cdot)] + (1 - \lambda) \overline{\mathbb{S}}_{\bar{p}}[\ell(\cdot)]$.
Figure~\ref{fig:adult_variance} summarizes stability under altered label proportions.
Across all tested positive-class fractions, $x_{\mathrm{IVO}}$ yields uniformly smaller replication-to-replication variability
than $x_{\mathrm{ERM}}$. 

\subsubsection{German credit dataset results}\label{subsubsec:german-credit}
We next test our methodology on the German credit dataset, also a popular benchmark for binary credit-risk classification. For this dataset,
each sample $u\in\Rmbb^d$ represents applicant-related attributes (e.g., account status, credit history, employment and housing),
and the binary label $v\in\{0,1\}$ indicates good versus bad credit. We apply the same preprocessing pipeline as in the Adult experiments
(one-hot encoding for categorical variables, normalization of continuous features, and removal of examples with missing values),
randomly split the data into a training set $\Dcal_{\mathrm{train}}$ and an initial test set $\Dcal_{\mathrm{test}}$,
and choose logistic regression as the prediction model.

\begin{figure}[!htbp]
    \centering
    \subfloat[PF-SMG approximation of the non-dominated set and the selected knee point.]{
        \includegraphics[width=0.4\textwidth]{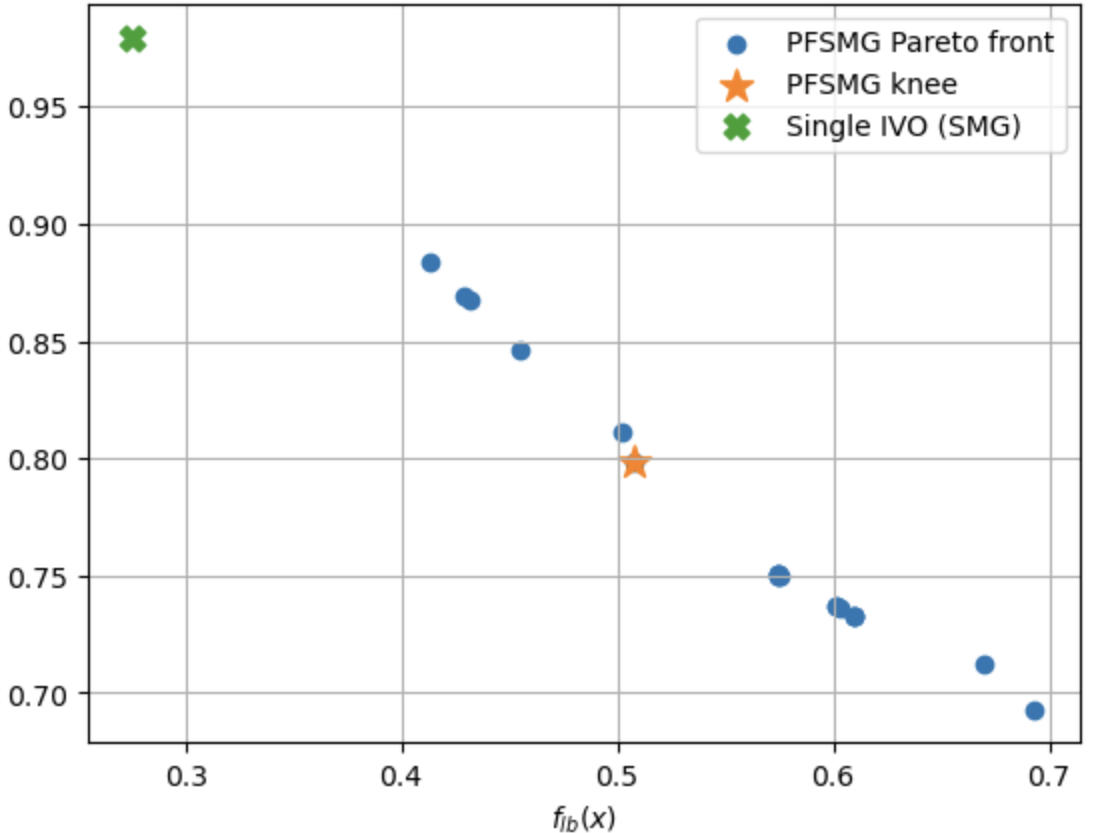}
        \label{fig:german_pareto}
    }
    \hfill
    \subfloat[Stability under different positive-class fractions.]{
        \includegraphics[width=0.48\textwidth]{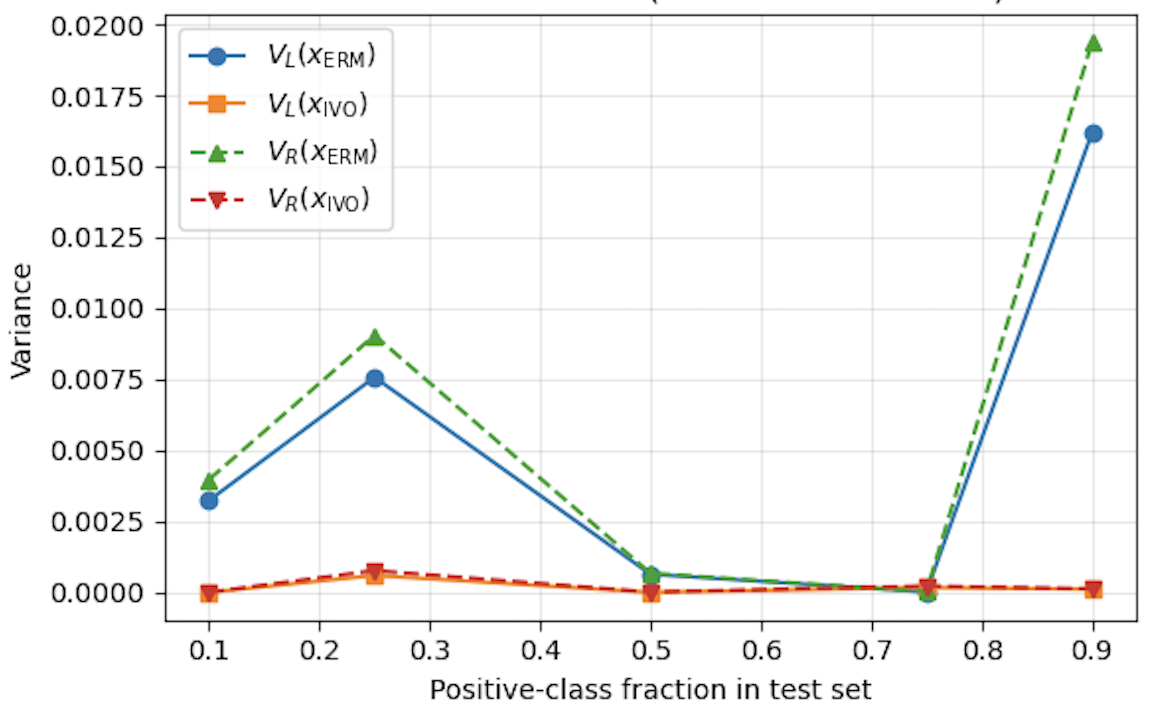}
        \label{fig:german_variance}
    }
    \caption{German Credit dataset results for IVO.}
    \label{fig:german_results}
\end{figure}

Similar to the previous test results, Figure~\ref{fig:german_pareto} visualizes the PF-SMG approximation of the Pareto front in the
$\bigl(f_{\ell b}(x),f_{u b}(x)\bigr)$ space. Figure~\ref{fig:german_variance} summarizes stability under class-imbalance shift.
Using the same evaluation protocol and robustness metrics as in the Adult experiments, we compare $x_{\mathrm{IVO}}$ and $x_{\mathrm{ERM}}$
across a range of positive-class fractions.
Again, $V_L(x)=V(x)$ is the variance
evaluated at $x=x_{\mathrm{ERM}}$ with $\mathrm{loss}(\cdot)=\ell(\cdot)$,
and $V_R(x)=V(x)$ is the variance evaluated at $x=x_{\mathrm{IVO}}$
with $\mathrm{loss}(\cdot)=\lambda \underline{\mathbb{S}}_{\underline{p}}[\ell(\cdot)] + (1 - \lambda) \overline{\mathbb{S}}_{\bar{p}}[\ell(\cdot)]$.
The test results indicate that $x_{\mathrm{IVO}}$ exhibits smaller variability than $x_{\mathrm{ERM}}$ on the German credit dataset as well.

\subsubsection{RVO results}
In this session, we test the stochastic RVO model and compare it with the baseline ERM model. As discussed in Section~\ref{sec: rl through rvo}, beyond the standard accuracy loss~$\ell$, we also incorporate a fairness loss function~$\ell_F$. The motivation is that optimizing accuracy alone can yield classifiers that achieve 
strong overall performance while still exhibiting systematic disparities across sensitive groups. Thus, introducing a fairness 
objective makes this trade-off explicit and allows us to search for solutions that balance predictive performance and equity. 
Moreover, from a robust learning perspective, fairness constraints/objectives may mitigate sensitivity to distributional shift 
(e.g., changes in group proportions or group-conditional feature distributions) by discouraging overly group-dependent decision rules, 
thereby improving model stability and robustness. In particular, we target disparate impact, approximated by a covariance-based 
metric that penalizes statistical dependence between the sensitive attribute and the prediction score~\cite{SLiu_LNVicente_2020}. Concretely, for model parameters $x$, with prediction function $\phi(u_i;x)$, we define the fairness loss
$\ell_F$ (a covariance proxy for disparate impact) by
\[
\ell_F(x)\;:=\; 
\left(\frac{1}{N}\sum_{i=1}^{N}\bigl(a_i-\bar a\bigr)\,\phi(u_i;x)\right)^2,
\qquad
\bar a := \frac{1}{N}\sum_{i=1}^{N} a_i,
\]
where $a_i$ denotes the sensitive attribute associated with $u_i$.

We denote by~$V_F(x)$ the variance evaluated at
both $x_{\mathrm{RVO}}$ and $x_{\mathrm{ERM}}$ with
$\mathrm{loss}(\cdot)=\ell_F(\cdot)$. To summarize robustness with a single scalar
that emphasizes fairness stability while accounting for predictive performance,
we introduce the fairness-risk-per-accuracy (FRPA) metric
\[
\mathrm{FRPA}(x)
=
\frac{V_F(x)}
{M_{\mathrm{ACC}}(x)},
\]
where~$M_{\mathrm{ACC}}(x)
=
\frac{1}{N_{\mathrm{rep}}}
\sum_{i=1}^{N_{\mathrm{rep}}}
\mathrm{Accuracy}^{(i)}(x)$,
and for the $r$-th test replication the accuracy is defined as
\[
\mathrm{Accuracy}^{(r)}(x)
=
\frac{1}{N_{\mathrm{test}}}
\sum_{i=1}^{N_{\mathrm{test}}}
\mathbf{1}\!\left(
\phi(u_i^{(r)};x)=v_i^{(r)}
\right).
\]
The numerator $V_F(x)$ measures how sensitive the fairness loss is to test
resampling under distributional shift, while the denominator
$M_{\mathrm{ACC}}(x)$ measures the average predictive performance across
replications. Hence, smaller values of $\mathrm{FRPA}(x)$ indicate that the
model exhibits more stable fairness behavior across replications while
maintaining competitive predictive accuracy.
\begin{figure}[ht]
    \centering
    \includegraphics[width=0.5\linewidth]{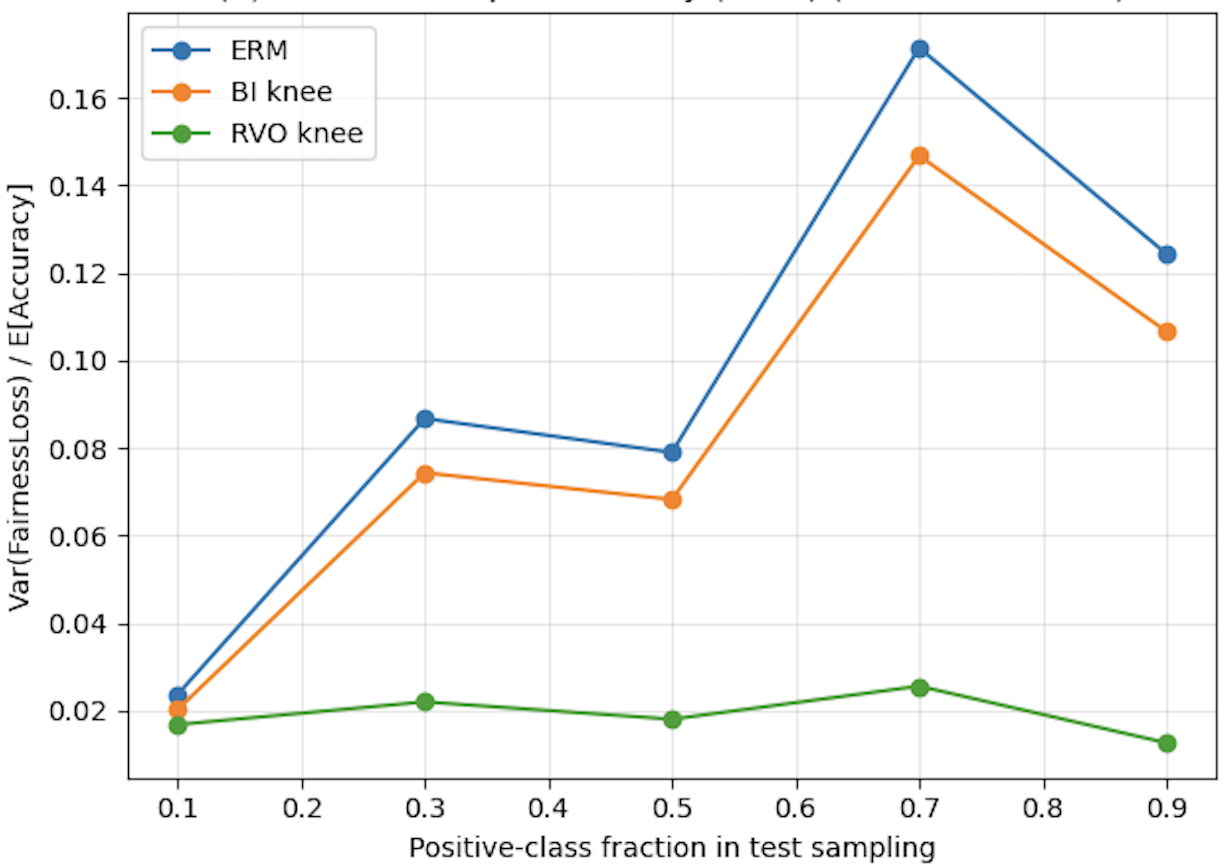}
    \caption{Adult dataset results for RVO.}
    \label{fig:RVO}
\end{figure}

We conducted an experiment on the Adult dataset, taking gender as the sensitive attribute. For comparison, we also solve a standard bi-objective problem involving fairness and predictive performance, without using the subquantile/superquantile criteria, and select its corresponding Pareto knee solution (denoted by BI knee).
Figure~\ref{fig:RVO} reports the performance of ERM, BI knee, and the RVO knee solution under class-proportion shifts using the $\mathrm{FRPA}$ metric. The results show that BI knee uniformly outperforms ERM, while the RVO knee solution attains the lowest $\mathrm{FRPA}$ values across all tested shift levels. Thus, although the bi-objective knee already improves fairness relative to ERM by accounting for both fairness and accuracy rather than accuracy alone, the RVO knee provides the strongest robustness, producing more stable fairness behavior while preserving competitive predictive performance under distributional shift.

\section{Conclusions and future works}\label{sec:conclusion}
This work illustrates that set-valued modeling can be used as a practical tool for robust learning. Specifically, in the hyperbox setting, the set-less relation reduces the comparison of image sets to a multi-objective optimization problem with a finite number of objectives. Consequently, an effective modeling framework depends on constructing meaningful and informative lower and upper objectives. 

We therefore study and explain how subquantiles and superquantiles can serve as suitable lower and upper objectives for this robust learning framework because of their natural ability to capture lower- and upper-tail behavior, thereby providing informative measures of optimistic and pessimistic performance under uncertainty.

We also contributed on the theoretical side by establishing and proving a Rockafellar--Uryasev-type characterization for subquantiles. This result complements the classical optimization representation of superquantiles, integrates naturally into our stochastic robust learning framework, and fills a theoretical gap in the analysis of lower-tail risk measures.

For the numerical results, in the distributional-shift experiments generated by changing the class proportions in the test set, the selected IVO knee solutions typically exhibit smaller sample variances across test replications than ERM, indicating greater robustness under distributional shift. In the accuracy--fairness setting, the robust RVO model provides decision makers with a way to select solutions that account for the robustness of fairness while maintaining competitive predictive accuracy, rather than improving one objective at the expense of the other.

Several extensions follow naturally from the present framework. First, beyond the IVO and RVO constructions studied here, one could incorporate additional performance criteria, such as precision, recall, or other fairness measures, together with accuracy and fairness, thereby leading to higher-dimensional hyperbox-valued models. This would make it possible to address more complex robust learning scenarios and to improve several metrics that are important to stakeholders simultaneously.
Second, although we adopt an angle-based knee selection rule on the Pareto front in this work, alternative selection criteria could be developed to target stability and robustness more directly~\cite{KDeb_SGupta_2010,JBranke_etal_2004,giovannelli2025pareto}. 
Third, the tail levels in the subquantile and superquantile objectives act as hyperparameters. Therefore, developing adaptive or data-driven procedures for selecting and tuning these parameters could further improve the reliability of the resulting decisions.

\section*{Acknowledgments}
This work is partially supported by the U.S. Air Force Office of Scientific Research~(AFOSR) award~FA9550-23-1-0217 and the U.S. Office of Naval Research~(ONR) award~N000142412656.

\bibliography{bibs/ref-set-valued,bibs/ref-BSG,bibs/ref-smg-moo,bibs/ref-bsg-moo,bibs/ref-n,bibs/ref-fairness,bibs/ref}

\appendix
\section{Proof of the subquantile characterization}\label{app:sub}
We now give a proof of the characterization~\eqref{def:uryasev} for subquantile functions, following the notation in Section~\ref{subsec:review}.
We begin with introducing a lemma that states the symmetry relationship between the quantiles of a random variable and those of its negation: at any level~$u\in(0,1)$ where the quantile function~$Q_Z$ is continuous, the $(1-u)$-quantile of~$-Z$ is exactly the negative of the $u$-quantile of~$Z$, that is, $Q_{-Z}(1-u)=-\,Q_Z(u)$.

\begin{lemma} \label{lemma:sym}
Let $Z$ be a real-valued random variable with cumulative distribution function~$F_Z(t)=\mathbb{P}(Z\le t)$ and quantile function~$Q_Z(u)=\inf\{t\in\mathbb{R}:F_Z(t)\ge u\}$ where~$u\in(0,1).$
For every~$u\in(0,1)$ that is a continuity point of the quantile function~$Q_Z$, the following identity holds
\begin{equation} \label{eq:quantile_identity}
 Q_{-Z}(1-u)=-\,Q_Z(u).   
\end{equation}
\end{lemma}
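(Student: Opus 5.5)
The plan is to prove the two inequalities $Q_{-Z}(1-u)\le -Q_Z(u)$ and $Q_{-Z}(1-u)\ge -Q_Z(u)$ directly from the infimum definition of the quantile. The only tool needed is the relation between $F_{-Z}$ and $F_Z$: for every $t\in\mathbb{R}$,
\[
F_{-Z}(t)=\mathbb{P}(Z\ge -t)=1-\mathbb{P}(Z<-t)=1-F_Z((-t)^-),
\]
where $F_Z(s^-)=\lim_{r\uparrow s}F_Z(r)$. Continuity of $Q_Z$ at $u$ will be used only in the second inequality, to exclude a flat piece of $F_Z$ at level exactly $u$.

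\emph{First inequality.} Let $q=Q_Z(u)$. For any $s<q$ we have $F_Z(s)<u$ by definition of the infimum, so $F_Z(q^-)\le u$. Hence $F_{-Z}(-q)=1-F_Z(q^-)\ge 1-u$. By the definition of $Q_{-Z}$ this gives $Q_{-Z}(1-u)\le -q$. This direction holds for every $u$, with no continuity assumption.

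\emph{Second inequality.} Suppose, for contradiction, that $Q_{-Z}(1-u)<-q$. Then there is some $t>q$ with $F_{-Z}(-t)\ge 1-u$. This means $1-F_Z(t^-)\ge 1-u$, i.e.\ $F_Z(t^-)\le u$, and so $F_Z(s)\le u$ for all $s<t$. Pick $s\in(q,t)$; then $F_Z(s)\le u$.

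Now use continuity of $Q_Z$ at $u$. For every $v>u$, monotonicity of $F_Z$ gives $Q_Z(v)\ge s$, because any $r<s$ satisfies $F_Z(r)\le F_Z(s)\le u<v$. Letting $v\downarrow u$ and using right continuity of $Q_Z$ at $u$ yields $q=Q_Z(u)\ge s>q$, a contradiction. Therefore $Q_{-Z}(1-u)\ge -q$, and combining with the first inequality proves \eqref{eq:quantile_identity}.

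The main obstacle is this second step. It is the only place where the continuity hypothesis enters, and the argument depends on converting the left limit $F_Z(t^-)$ into a strict-interior point $s$, so that the flat stretch of $F_Z$ at height $u$ lies strictly to the right of $q$.

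A few details need care along the way. The identity $\mathbb{P}(Z<-t)=F_Z((-t)^-)$ should be stated explicitly. The infimum defining $Q_{-Z}(1-u)$ is attained, by right continuity of $F_{-Z}$, although only the existence of some $t>q$ with the required property is actually needed. Finally, only right continuity of $Q_Z$ at $u$ is used, so the lemma in fact holds under that weaker hypothesis. In the subsequent proof of \eqref{def:uryasev}, the lemma will be applied inside an integral over $u$; there it suffices that $Q_Z$, being monotone, has at most countably many discontinuities.
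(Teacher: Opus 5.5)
Your proof is correct and follows the paper's argument essentially step for step: the same identity $F_{-Z}(t)=1-F_Z((-t)^-)$, the same unconditional bound $Q_{-Z}(1-u)\le -Q_Z(u)$ via $F_Z(q^-)\le u$, and the same limit $v\downarrow u$ to rule out $F_Z(y^-)\le u$ for some $y>Q_Z(u)$; the paper states this last step as the claim $F_Z(y^-)>u$ for all $y>Q_Z(u)$, which is the contrapositive of your contradiction step. One small correction to your closing remark: $Q_Z(u)=\inf\{t:F_Z(t)\ge u\}$ is always left-continuous, so right continuity at $u$ is equivalent to continuity at $u$, and the hypothesis is not actually weakened.
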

\begin{proof}
First, we write the CDF of $-Z$ 
(since~$F_Z$ is nondecreasing and bounded in~$[0,1]$, the left limit~$F_Z(t^-)=\lim_{s\uparrow t}F_Z(s)$ exists for every $t$)
\begin{equation} \label{eq:F_-z}
F_{-Z}(t)=\mathbb{P}(-Z\le t)=\mathbb{P}(Z\ge -t)=1-\mathbb{P}(Z<-t)=1-F_Z((-t)^-).
\end{equation}
For $u\in(0,1)$, by definition of the quantile function, we have
\begin{equation}\label{eq:Q_-z}
Q_{-Z}(1-u)
=\inf\{t:F_{-Z}(t)\ge 1-u\}
=\inf\{t:F_Z((-t)^-)\le u\}.
\end{equation}

Let $x=Q_Z(u)=\inf\{s:F_Z(s)\ge u\}$. Combing~\eqref{eq:F_-z} and~\eqref{eq:Q_-z}, and using the fact that~$F_Z(x)\ge u$ and $F_Z(x^-)\leq u$ yields
\begin{equation} \label{eq: Fz}
 F_{-Z}(-x)=1-F_Z(x^-)\geq 1-u,   
\end{equation}
which implies $Q_{-Z}(1-u)\le -x=-Q_Z(u)$ (since from~\eqref{eq: Fz}, we know that~$-x$ belongs to the set $\{t: F_{-Z}(t)\ge 1-u\}$).

Assume now that $u$ is a continuity point of the monotone function~$Q_Z$.
We claim that for every $y>x$,
\begin{equation}\label{eq:claim}
F_Z(y^-)>u.
\end{equation}
Indeed, if $F_Z(y^-)\le u$ for some $y>x$, then for every $v\in(u,1)$ we still have
$F_Z(y^-)\le v$. Hence $F_Z(t)<v$ for all $t<y$, which yields $Q_Z(v)\ge y$.
Letting $v\downarrow u$ gives $\lim_{v\downarrow u}Q_Z(v)\ge y>x=Q_Z(u)$, contradicting
continuity at $u$. Thus \eqref{eq:claim} holds.

Now take any $t<-x$ and set $y=-t>x$. By \eqref{eq:claim} we have
\[
F_{-Z}(t)=1-F_Z(y^-)<1-u,
\]
so no $t<-x$ can satisfy $F_{-Z}(t)\ge 1-u$. Then by using~\eqref{eq:Q_-z} again, one has
$Q_{-Z}(1-u)\ge -x=-Q_Z(u)$.

Combining both~$Q_{-Z}(1-u)\le -x=-Q_Z(u)$ and~$Q_{-Z}(1-u)\ge -x=-Q_Z(u)$ yields~$Q_{-Z}(1-u)=-Q_Z(u)$ at every continuity point~$u$ of~$Q_Z$.
\end{proof}

This identity~\eqref{eq:quantile_identity} allows us to rewrite integrals involving~$Q_Z$ in terms of~$Q_{-Z}$. In particular, it provides a direct way to express the subquantile~$\underline{\mathbb{S}}_{p}[Z]$ in terms of the superquantile~$\bar{\mathbb{S}}_{1-p}[-Z]$, which is the key step in proving the characterization below.

\begin{theorem}
    Let~$Z$ be a real-valued random variable. For~$p\in(0,1)$, the subquantile of~$Z$ at level~$p$ admits the characterization in~\eqref{def:uryasev}, repeated here for convenience:
\begin{equation}\label{eq:subquantile-uryasev}
\underline{\mathbb{S}}_{p}[Z]
=
\max_{\eta\in\mathbb{R}}
\left\{
\eta - \frac{1}{p}\,\mathbb{E}\big[(\eta-Z)_+\big]
\right\}.
\end{equation}
\end{theorem}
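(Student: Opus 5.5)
The plan is to reduce the subquantile to a superquantile of $-Z$ using Lemma~\ref{lemma:sym}, and then apply the Rockafellar--Uryasev characterization~\eqref{def:superquantile2}. We implicitly assume $\mathbb{E}|Z|<\infty$, which is needed for both sides to be finite, as in~\eqref{def:superquantile2}.

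\textbf{Step 1: express the subquantile through $-Z$.} Start from the definition~\eqref{def:subquantile-gen}, $\underline{\mathbb{S}}_p[Z]=\frac{1}{p}\int_0^p Q_Z(u)\,du$. The quantile function $Q_Z$ is monotone, so it has at most countably many discontinuities; these form a Lebesgue-null set. Lemma~\ref{lemma:sym} therefore gives $Q_Z(u)=-Q_{-Z}(1-u)$ for almost every $u\in(0,1)$. Substituting $u'=1-u$ then yields
\[
\underline{\mathbb{S}}_p[Z]=-\frac{1}{p}\int_{1-p}^{1}Q_{-Z}(u')\,du'=-\bar{\mathbb{S}}_{1-p}[-Z],
\]
where the last equality is~\eqref{def:superquantile-gen} applied to $-Z$ at level $1-p\in(0,1)$. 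Integrability of $Q_Z$ on $(0,p)$ follows from $\mathbb{E}|Z|<\infty$, because $Q_Z(U)$ has the law of $Z$ when $U$ is uniform on $(0,1)$.

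\textbf{Step 2: apply~\eqref{def:superquantile2} to $-Z$.} With level $1-p$, so that $1-(1-p)=p$, it reads
\[
\bar{\mathbb{S}}_{1-p}[-Z]=\min_{\zeta\in\mathbb{R}}\Big\{\zeta+\frac{1}{p}\mathbb{E}[(-Z-\zeta)_+]\Big\}.
\]
Negating both sides turns the min into a max:
\[
\underline{\mathbb{S}}_p[Z]=\max_{\zeta}\Big\{-\zeta-\frac1p\mathbb{E}[(-\zeta-Z)_+]\Big\}.
\]
Setting $\eta=-\zeta$ gives exactly~\eqref{eq:subquantile-uryasev}. Since the minimum in~\eqref{def:superquantile2} is attained, so is the maximum. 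The maximizer is $\eta=-Q_{-Z}(1-p)$, which equals $Q_Z(p)$ whenever $p$ is a continuity point of $Q_Z$, again by Lemma~\ref{lemma:sym}. This justifies the remark after~\eqref{def:uryasev}, at least generically.

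\textbf{Main obstacle.} The only delicate point is Step 1: the lemma holds only at continuity points, so we must pass to the integral via an almost-everywhere argument. Monotonicity of $Q_Z$ supplies this. Everything else is a change of variables and a sign flip.
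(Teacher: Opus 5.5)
Your proposal is correct and is essentially the paper's proof. Both apply Lemma~\ref{lemma:sym} almost everywhere (the discontinuities of the monotone $Q_Z$ are countable), change variables to get $\underline{\mathbb{S}}_p[Z]=-\bar{\mathbb{S}}_{1-p}[-Z]$, and then apply Rockafellar--Uryasev to $-Z$ at level $1-p$ with the sign flip $\eta=-\zeta$. Your explicit assumption $\mathbb{E}|Z|<\infty$ and the attainment remark are sensible additions the paper leaves implicit; as a minor aside, the maximizer need not be unique, and $Q_Z(p)$ is in fact always a maximizer (it satisfies $F_Z(Q_Z(p)^-)\le p\le F_Z(Q_Z(p))$), not just generically.
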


\begin{proof}
By Lemma~\ref{lemma:sym}, the quantile functions of~$Z$ and~$-Z$ satisfy~$Q_{-Z}(1-u)=-Q_Z(u)$.
It is easy to see that~$Q_Z(\cdot)$ is nondecreasing in~$u$.
Therefore, the set of discontinuity points of~$Q_Z$ is at most countable (hence has Lebesgue measure zero), so the identity may fail only on a null set of~$u$, which does not affect the value of the integral.
Thus, recalling the quantile representation of subquantiles~\eqref{def:subquantile-gen} and using the relation above gives
\begin{equation} \label{eq:s^s_}
\underline{\mathbb{S}}_{p}[Z]
= -\frac{1}{p}\int_{0}^{p} Q_{-Z}(1-u)\,du
= -\frac{1}{p}\int_{1-p}^{1} Q_{-Z}(v)\,dv
= -\bar{\mathbb{S}}_{1-p}[-Z],
\end{equation}
where we use the change of variables $v=1-u$ and the definition of $\bar{\mathbb{S}}_{1-p}$ in~\eqref{def:superquantile-gen}.

Now apply Rockafellar--Uryasev's characterization of superquantiles~\eqref{def:superquantile2} to the random variable $-Z$ at level $1-p$, we have
\begin{equation} \label{eq:s^1-p}
\bar{\mathbb{S}}_{1-p}[-Z]
=
\min_{\alpha\in\mathbb{R}}
\left\{
\alpha + \frac{1}{p}\,\mathbb{E}\big[(-Z-\alpha)_+\big]
\right\}.
\end{equation}
Negating both sides of~\eqref{eq:s^1-p} and using the relation of~\eqref{eq:s^s_} yields
\begin{equation*}
\underline{\mathbb{S}}_{p}[Z]
=
\max_{\alpha\in\mathbb{R}}
\left\{
-\alpha - \frac{1}{p}\,\mathbb{E}\big[(-Z-\alpha)_+\big]
\right\}.
\end{equation*}
Finally, by substituting $\eta:=-\alpha$, we obtain
\begin{equation*}
\underline{\mathbb{S}}_{p}[Z]
=
\max_{\eta\in\mathbb{R}}
\left\{
\eta - \frac{1}{p}\,\mathbb{E}\big[(\eta-Z)_+\big]
\right\},
\end{equation*}
which is exactly~\eqref{eq:subquantile-uryasev}.

\end{proof}

\section{Stochastic multi-gradient (SMG) method}\label{SMG}

In this section of the appendix, we briefly describe the stochastic multi-gradient (SMG) method, which extends the traditional single-objective stochastic gradient (SG) method to the multi-objective setting. Let us first consider a deterministic~MOO problem
\begin{equation*}
     \min_{x \in X} \; F(x) = (f_1(x), \ldots, f_m(x)).
\label{deterministic_moo_app}
\end{equation*}
In the smooth (continuously differentiable) case, the steepest descent direction or negative multi-gradient is given by the solution of the subproblem (see~\cite{JFliege_BFSvaiter_2000})
\begin{equation*}
g(x) \in \argmin_{u \in \mathbb{R}^n} \max_{i \in \{1, \ldots, m\}} \nabla f_i(x)^\top u + (1/2) \Vert u\Vert^2,
\end{equation*}
or equivalently, by taking the dual of the subproblem, seen 
as the minimal-norm convex combination of the~$\nabla f_i(x)$'s,
\begin{equation*}
    \begin{aligned}
    g(x) = \sum_{i=1}^m \lambda^i \nabla f_i(x) \quad \mbox{with $\lambda$ the solution of} \quad 
  \operatorname{min}_{\lambda} \; & \big\Vert
 \sum_{i=1}^m \lambda^i \nabla f_i(x)
  \big\Vert^{2} \quad \text{ s.t. } \quad \lambda \in \Delta,
    \end{aligned}
\end{equation*}
where
$\Delta = \{ \lambda \in \mathbb{R}^m: \sum_{i=1}^m \lambda^i = 1, \lambda^i \geq 0, i=1,\ldots,m\}$ is the simplex set. Given a stepsize~$\alpha_k>0$, the corresponding multi-gradient descent update at each iteration~$k$ is
\begin{equation}\label{prob:dMG}
 x_{k+1}=x_k-\alpha_k\,g(x_k).   
\end{equation}

For the stochastic setting, a stochastic~MOO problem can be written as
\begin{equation}
     \min_{x \in X} \; F(x) = (f_1(x), \ldots, f_m(x)) \; = \;  (\mathbb{E}[h_1(x,\xi)], \ldots, \mathbb{E}[h_m(x, \xi)]),
\label{stochastic_moo_app}
\end{equation}
where each objective function~$f_i(\cdot)$ is defined as the expected value of a stochastic function~$h_i(\cdot, \xi)$, which depends on a random variable~$\xi$ within a probability space with probability measure independent of~$x$. Solving the stochastic~MOO problem~\eqref{stochastic_moo_app} at each iteration~$k$, the stochastic multi-gradient~(SMG) algorithm~\cite{MQuentin_PFabrice_JADesideri_2018,SLiu_LNVicente_2021} takes a step along a stochastic multi-gradient~$g(x_k, \xi_k)$
(obtained by replacing the gradients in~\eqref{prob:dMG} by their stochastic estimates)
\begin{equation*}\label{prob:SMG}
    \begin{aligned}
    g(x_k, \xi_k) = \sum_{i=1}^m \lambda_k^i g_i(x_k, \xi_k) \quad \mbox{with $\lambda_k$ the solution of} \quad 
  \operatorname{min}_{\lambda} \; & \big\Vert
 \sum_{i=1}^m \lambda^i g_i(x_k, \xi_k)
  \big\Vert^{2} \quad \text{ s.t. } \quad \lambda \in \Delta.
    \end{aligned} 
\end{equation*}

\section{Smoothing stochastic gradient (SSG) method}\label{SSG}

Notice that the super- and sub-quantile objectives considered in this paper can be expressed in the compositional form
    \[
    \max~\{ 0, \mathbb{E}[\psi(x,\xi)]) \}.
    \]
In particular, the outer mapping $t \mapsto \max\{0,t\}$ is non-smooth, which makes the resulting objective non-differentiable at the kink point even when the inner function $\psi(\cdot,\xi)$ is smooth.
To handle this outer non-smoothness, we will use smoothing techniques that approximate $\phi(t)=\max\{0,t\}$ by a family of continuously differentiable smoothing functions~$\tilde{\phi}(t,\mu)$ which converge to the original function as the smoothing parameter~$\mu$ goes to zero (at the cost of increasingly larger Lipschitz constants). This approximation motivated the smoothing stochastic gradient (SSG) method introduced in~\cite{GiovannelliTanVicente25T010}, which performs stochastic gradient steps on the smoothed objectives while driving the smoothing parameter to zero at a designated rate.

Next, we briefly present the idea of the SSG algorithm to solve the expected risk problem written in the general form
\begin{equation} \label{problem: f(x)}
    \min_{x\in\Rmbb^n}~f(x)~~(f~\text{involves}~\xi,~\psi~\text{and}~\phi),
\end{equation}
where~$x$ is the vector of decision variable,~$\xi$ is a random vector,~$\psi$ is a differentiable function, and~$\phi$ is a function which is not necessarily smooth. Problem~\eqref{problem: f(x)} is intended to cover, as a special case, objectives such as $\max\{0,\mathbb{E}[\psi(x,\xi)]\}$ discussed above.
We consider that a smoothing function~$\ftilde(x,\mu)$ is available for~$f(x)$ involving a smoothing function~$\tilde{\phi}$ of~$\phi$ and also including the randomness given by~$\xi$. We also assume that one can draw gradient estimates~$\nabla_x\ftilde(x,\xi,\mu)$ for the smoothing function~$\ftilde(x,\mu)$. An SSG method can then be stated in Algorithm~$\ref{algorithm: 1}$ and it only differs from the traditional stochastic gradient algorithm in the update of the smoothing parameter~$\mu_k$. 
\begin{algorithm}[H] 
	\caption{Smoothing stochastic gradient (\textbf{SSG}) method}\label{algorithm: 1}
	\begin{algorithmic}[1]
		\medskip
		\item[] {\bf Input:} $x_1$,~$\{\alpha_k\}$, and~$\{\mu_k\}$.
		\medskip
		\item[] {\bf For~$k = 1, 2, \ldots$ \bf do}
		\item[] \quad {\bf Step 1.} 
        Generate a realization~$\xi_k$ of the random variable~$\xi$. Then compute~$\nabla\ftilde_x(x_k,\xi_k,\mu_k)$.
            \nonumber
            \item[] \quad {\bf Step 2.}
        Update iterate~$x_{k+1} = x_k-\alpha_k\nabla\ftilde_x(x_k,\xi_k,\mu_k)$.
            \nonumber
		\item[] \quad {\bf Step 3.} 
        Update smoothing parameter~$\mu_{k+1}\leq\mu_k$.
		\item[] {\bf End do}
    	\end{algorithmic}
\end{algorithm}

When moving from the single-objective SSG setting to the multi-objective framework considered in this paper, the main modification to the SMG method is that each non-smooth stochastic objective~$h_i(x,\xi)$ is replaced by a smoothed approximation~$\tilde h_i(x,\xi,\mu)$ before forming the stochastic multi-gradient step. Accordingly, the expected objective~$f_i(x)=\mathbb{E}[h_i(x,\xi)]$ is replaced by its smoothed counterpart~$\tilde f_i(x,\mu)=\mathbb{E}[\tilde h_i(x,\xi,\mu)]$, and instead of using the original stochastic gradient estimates~$g_i(x_k,\xi_k)$, one uses the smoothed stochastic gradients
\[
\tilde g_i(x_k,\xi_k,\mu_k)=\nabla_x \tilde h_i(x_k,\xi_k,\mu_k), \qquad i=1,\ldots,m.
\]
The usual SMG procedure is then applied to these smoothed gradient estimates, that is, the stochastic multi-gradient is computed as
\[
\tilde g(x_k,\xi_k,\mu_k)=\sum_{i=1}^m \lambda_k^i \tilde g_i(x_k,\xi_k,\mu_k),
\]
where~$\lambda_k$ solves the same simplex-constrained minimum-norm problem as in the original SMG method. 

\section{Pareto-front stochastic multi-gradient (PF-SMG) method}
In the numerical tests of this paper, to compute good approximations of the entire Pareto front in a single run, we used the Pareto Front SMG (PF-SMG) algorithm, developed in~\cite{SLiu_LNVicente_2021}. The key idea of PF-SMG is to maintain and iteratively refine a list of candidate points that approximates the Pareto front. At each outer iteration, the algorithm first expands the current candidate list by adding perturbed points around each candidate point. Then, the algorithm proceeds by performing multiple short SMG runs from each candidate point and collecting the resulting endpoints. After that, it prunes the list by removing all dominated points. By repeating the procedure described above, PF-SMG yields a set of non-dominated points that approximates the Pareto front.

\end{document}